\newcommand{\ToE}[1]{(\hyperref[ToE#1]{ToE#1})}
\newcommand{\ToEp}{(\hyperref[ToE2']{ToE2'})}
\newtheorem{thm}[equation]{Theorem}
\newtheorem{lem}[equation]{Lemma}
\newtheorem{prop}[equation]{Proposition}
\newtheorem*{thm*}{Theorem}
\newtheorem*{prop*}{Proposition}
\newtheorem*{cor*}{Corollary}
\newtheorem*{lem*}{Lemma}
\newtheorem*{MT*}{Main Theorem}
\theoremstyle{definition} %
\newtheorem{defn}[equation]{Definition}
\newtheorem*{defn*}{Definition}
\newtheorem{eg}[equation]{Example}
\theoremstyle{remark} %
\newtheorem{rmk}[equation]{Remark}
\newtheorem*{rmk*}{Remark}
\newtheorem*{rmks*}{Remarks}
\newtheoremstyle{exercise}
  {3pt}
  {3pt}
  {\small}
  {}
  {\sc\small}
  {.}
  {.5em}
   {}     
  {}
\theoremstyle{exercise}
\renewcommand{\theequation}{#1}}%
\renewcommand{\theequation}{\arabic{equation}}\addtocounter{equation}{-1}\global\@ignoretrue}
\renewcommand{\theequation}{#1}\begin{eqnarray}}%
\renewcommand{\theequation}{\arabic{equation}}\addtocounter{equation}{-1}\global\@ignoretrue}
\newenvironment{borel}[1]%
{\smallskip \refstepcounter{equation}\noindent{\textbf{\theequation.} }{{\textbf{#1.}}}}%
{\smallskip \global\@ignoretrue}
\smallskip \refstepcounter{equation}\noindent{\textbf{\theequation.} }{{\textbf{#1}}}}%
\smallskip \refstepcounter{equation}{\sc \theequation}{\sc (#1).}}%
\smallskip \refstepcounter{equation}\noindent{\sc \theequation.}{\sl{ #1.}}}%
\newenvironment{borel*}%
{\smallskip \refstepcounter{equation}\noindent{\textbf{\theequation.}}}%
{\global\@ignoretrue}
\newcommand{\flist}[1]{\hangindent\leftmargini\textup{(1)}\hskip\labelsep {#1}%
\begin{enumerate}%
\setcounter{enumi}{1}%
}
\newcommand{\ot}{\otimes}
\newcommand{\darkrad}{0.115}
\newcommand{\eand}{\quad\text{and}\quad}
\DeclareMathOperator{\Lie}{Lie}
\newcommand{\C}{{\mathbb{C}}}        
\newcommand{\R}{{\mathbb{R}}}        
\newcommand{\F}{{\mathbb{F}}}        
\newcommand{\Z}{{\mathbb{Z}}}        
\renewcommand{\H}{{\mathcal{H}}}  
\newcommand{\Zm}[1]{\Z/{#1}\Z}
\newcommand{\e}{\varepsilon}
\newcommand{\D}{\Delta}
\newcommand{\G}{{\Gamma}}       
\newcommand{\oddots}{{\mathinner{\mkern1mu\raise1pt\vbox{\kern7pt\hbox{.}}\mkern2mu\raise4pt\hbox{.}\mkern2mu\raise7pt\hbox{.}\mkern1mu}}}
\newcommand{\qform}[1]{{\left\langle{#1}\right\rangle}}                   
\DeclareMathOperator{\Spin}{Spin}           
\newcommand{\Sp}{\mathrm{Sp}}
\DeclareMathOperator{\SL}{SL}
\DeclareMathOperator{\GL}{GL}
\newcommand{\SO}{\mathrm{SO}}
\newcommand{\SU}{\mathrm{SU}}
\newcommand{\so}{\mathfrak{so}}
\DeclareMathOperator{\rank}{rank}
\DeclareMathOperator{\End}{End}
\newcommand{\injects}{\hookrightarrow}
\newcommand{\ra}{\rightarrow}
\newcommand{\stbtmat}[4]{\left( \begin{smallmatrix} #1&#2 \\ #3&#4 \end{smallmatrix} \right) }
\renewcommand{\D}{\mathrm{D}}
\newcommand{\B}{\mathrm{B}}
\newcommand{\E}{\mathrm{E}}
\renewcommand{\F}{\mathrm{F}_4}
\renewcommand{\G}{\mathrm{G}_2}
\newcommand{\U}{\mathrm{U}}
\renewcommand{\H}{\mathbb{H}}
\newcommand{\cE}{\E}
\newcommand{\SM}{\mathrm{SM}}
\newcommand{\GUT}{\mathrm{GUT}}
\newcommand{\g}{\mathfrak{g}}
\renewcommand{\sl}{\mathfrak{sl}}
\renewcommand{\sp}{\mathfrak{sp}}
\newcommand{\h}{\mathfrak{h}}
\renewcommand{\e}{\mathfrak{e}}
\newcommand{\at}{\tilde{\alpha}}
\newcommand{\ach}{\alpha^\vee}
\newcommand{\bch}{\beta^\vee}
\newcommand{\Qch}{Q^\vee}
\newcommand{\mx}{{\mathrm{max}}}
\numberwithin{equation}{section}
\begin{document}

\title{There is no ``Theory of Everything" inside $\E_8$}
\author{Jacques Distler}
\address{(Distler) Theory Group, Department of Physics, and Texas Cosmology Center, University of Texas, Austin, TX 78712}
\email{distler@golem.ph.utexas.edu}
\thanks{Report \#: UTTG-05-09, TCC-020-09}

\author{Skip Garibaldi}
\address{(Garibaldi) Department of Mathematics \& Computer Science, 400 Dowman Dr., Emory University, Atlanta, GA 30322}
\email{skip@member.ams.org}

\subjclass[2000]{Primary 22E47; secondary 17B25, 81R05, 83E15}


\begin{abstract}
We analyze certain subgroups of real and complex forms of the Lie group $\E_8$, and deduce that any ``Theory of Everything" obtained by embedding the gauge groups of gravity and the Standard Model into a real or complex form of $\E_8$ lacks certain representation-theoretic properties required by physical reality.  The arguments themselves amount to representation theory of Lie algebras in the spirit of Dynkin's classic papers and are written for mathematicians.
\end{abstract}

\maketitle

\section{Introduction}

Recently, the preprint \cite{Lisi} by Garrett Lisi has generated a lot of popular interest. It boldly claims to be a sketch of a ``Theory of Everything", based on the idea of combining the local Lorentz group and the gauge group of the Standard Model in a real form of $\E_8$ (necessarily not the compact form, because it contains a group isogenous to $\SL(2, \C)$).  The purpose of this paper is to explain some reasons why an entire class of such models---which include the model in \cite{Lisi}---cannot work, using mostly mathematics with relatively little input from physics.


The mathematical set up is as follows.  Fix a real Lie group $\cE$.  We are interested in subgroups $\SL(2, \C)$ and $G$ of $\cE$ so that:
\[
\parbox{4in}{$G$ is connected, compact, and centralizes $\SL(2,\C)$} \tag{ToE1}\label{ToE1}
\]
We complexify and then decompose $\Lie(\cE) \ot \C$ as a direct sum of representations of $\SL(2,\C)$ and $G$.
We identify $\SL(2, \C) \otimes_\R \C$ with $\SL_{2,\C} \times \SL_{2,\C}$ and write
\begin{equation} \label{decomp}
\Lie(\cE) = \bigoplus_{m,n \ge 1} m \ot n \ot V_{m,n}
\end{equation}
where $m$ and $n$ denote the irreducible representation of $\SL_{2,\C}$ of that dimension and $V_{m,n}$ is a complex representation of $G \ot_\R \C$.  (Physicists would usually write $2$ and $\bar{2}$ instead of $2 \ot 1$ and $1 \ot 2$.)  Of course,
\[
\overline{m \ot n \ot V_{m,n}} \simeq n \ot m \ot \overline{V_{m,n}}
\]
and since the action of $\SL(2,\C) \cdot G$ on $\Lie(\cE)$ is defined over $\R$, we deduce that $\overline{V_{m,n}} \simeq V_{n,m}$.
We further demand that
\begin{gather*}
\parbox{4in}{$V_{m,n} = 0$ if $m + n > 4$, and} \tag{ToE2}\label{ToE2} \\
\parbox{4in}{$V_{2,1}$ is a complex representation of $G$.} \tag{ToE3} \label{ToE3}
\end{gather*}
We recall the definition of complex representation and explain the physical motivation for these hypotheses in the next section.  Roughly speaking, \ToE1 is a trivial requirement based on trying to construct a Theory of Everything along the lines suggested by Lisi, \ToE2 is the requirement that the model not contain any ``exotic" higher-spin particles, and \ToE3 is the statement that the gauge theory (with gauge group $G$) is \emph{chiral}, as required by the Standard Model.
In fact, physics requires slightly stronger hypotheses on $V_{m,n}$, for $m+n=4$. We will not impose the stronger version of \ToE2.

\begin{defn} \label{toe.def}
A \emph{candidate ToE subgroup} of a real Lie group $\cE$ is a subgroup generated by a copy of $\SL(2, \C)$ and a subgroup $G$ such that \ToE1 and (\hyperref[ToE3]{ToE2}) hold.  A \emph{ToE subgroup} is a candidate ToE subgroup for which \ToE3 also holds.
\end{defn}

Our main result is:

\begin{thm} \label{MT}
There are no ToE subgroups in (the transfer of) the complex $\E_8$ nor in any real form of $\E_8$.
\end{thm}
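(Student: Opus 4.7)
The strategy is to derive a contradiction with \ToE3 using only the self-duality of the adjoint representation of $\cE$ and the compactness of $G$, without any classification of subgroups of $\E_8$. The goal is to establish $\overline{V_{2,1}} \simeq V_{2,1}$ as $G$-representations, which directly contradicts the standard definition of a complex representation.

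First, I would use that the Killing form of $\Lie(\cE)$ extends $\C$-bilinearly to a non-degenerate, symmetric, $\SL(2,\C) \cdot G$-invariant form on $\Lie(\cE) \ot \C$. Because each irreducible $\SL_{2,\C} \times \SL_{2,\C}$-module $m \ot n$ is self-dual, Schur's lemma forces the resulting equivariant isomorphism $\Lie(\cE) \ot \C \simeq (\Lie(\cE) \ot \C)^*$ to pair each $(m,n)$-isotypic component with itself. Matching the decomposition \eqref{decomp} against its dual then yields a $G \ot_\R \C$-equivariant isomorphism $V_{m,n} \simeq V_{m,n}^*$ for every $(m,n)$. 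Next, I would combine this with the compactness of $G$: any finite-dimensional representation of a compact group admits an invariant Hermitian inner product, producing an antilinear equivariant isomorphism $V^* \simeq \overline{V}$; applied to $V = V_{2,1}$ and chained with the previous step, this gives $\overline{V_{2,1}} \simeq V_{2,1}$, directly violating \ToE3. The same uniform argument handles every real form of $\E_8$ and the transfer of $\E_8(\C)$, since in each case $\Lie(\cE)$ is a real semisimple Lie algebra whose Killing form extends to a non-degenerate invariant symmetric bilinear form on the complexification.

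The main obstacle I anticipate is pinning down the precise definition of ``complex representation'' used in \ToE3. If ``complex'' means $V \not\simeq \overline{V}$, the plan above closes the argument. If instead it merely requires that $V_{2,1}$ contain \emph{some} non-self-conjugate irreducible summand---a strictly weaker condition---then self-duality only forces such summands to appear in complex-conjugate pairs, and I would fall back on a Dynkin-style enumeration: use \ToE2 to restrict each $\SL_2$-factor to those whose weighted Dynkin diagram makes the adjoint of $\E_8$ decompose into summands of dimension at most $3$ (a very restrictive condition on the coefficients of the highest root of $\E_8$), list the resulting commuting pairs of $\SL_2$'s, and then inspect the structure of $V_{2,1}$ as a $G$-representation in each of the few surviving cases.
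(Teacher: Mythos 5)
Your argument is correct, and it reaches the conclusion by a genuinely different and more uniform route than the paper. The paper classifies the relevant copies of $\SL(2,\C)$ via the Dynkin index (Proposition \ref{sl2}), computes their centralizers case by case (Sections 5--8), and verifies in each case that $V_{2,1}$ restricted to the maximal compact subgroup is quaternionic. You instead observe that the Killing form, extended bilinearly, is nondegenerate and invariant, and that it pairs the $(m,n)$-isotypic component of $\Lie(\cE)\ot\C$ only with itself: the cross-pairings vanish by a weight argument for the product of the two $\SL_{2,\C}$-tori (a weight of $2 \ot 1$ plus a weight of $m \ot n$ with $(m,n) \neq (2,1)$ is never zero), so the restriction of the form to $2\ot 1\ot V_{2,1}$ is nondegenerate and yields $V_{2,1}\simeq V_{2,1}^*$; unitarity, i.e.\ the compactness of $G$ from \ToE1, converts this to $V_{2,1}\simeq \overline{V_{2,1}}$. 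Your worry about the meaning of ``complex'' is resolved by Definition \ref{physreps}: for a unitary representation of a compact group, possessing a self-conjugate structure is equivalent to $V\simeq\overline{V}$ (self-conjugate irreducible constituents are real or quaternionic, the rest occur in conjugate pairs, and a $J$ with $J^4=1$ can be assembled from these pieces), so the first branch of your dichotomy applies and the Dynkin-style fallback is never needed. Two further remarks. First, because the invariant form on the $2$ of $\SL_{2,\C}$ is alternating while the Killing form is symmetric, the induced form on $V_{2,1}$ is alternating, so your argument actually shows $V_{2,1}$ is pseudoreal---exactly the pattern recorded in the paper's tables \eqref{Results} and \eqref{ExtraResults}. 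Second, your proof uses neither \ToE2 nor anything specific to $\E_8$, so it establishes the nonexistence statement for every semisimple real group $\cE$; what it does not provide, and what the paper's longer route buys, is the explicit classification in \ref{mainresults} of the candidate ToE subgroups and of which real forms of $\E_8$ admit them, which the authors present as a result of independent interest.
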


\subsection*{Notation}
Unadorned Lie algebras and Lie groups mean ones over the real numbers.  We use a subscript $\C$ to denote complex Lie groups---e.g., $\SL_{2,\C}$ is the (complex) group of 2-by-2 complex matrices with determinant 1.  We can view a $d$-dimensional complex Lie group $G_\C$ as a $2d$-dimensional real Lie group, which we denote by $R(G_\C)$.  (Algebraists call this operation the ``transfer" or ``Weil restriction of scalars"; geometers, and many physicists, call this operation ``realification.")  We use the popular notation of $\SL(2, \C)$ for the transfer $R(\SL_{2,\C})$ of $\SL_{2, \C}$; it is a double covering of the ``restricted Lorentz group", i.e., of the identity component $\SO(3,1)_0$ of $\SO(3,1)$.

\begin{borel}{Strategy and main results}\label{mainresults}
Our strategy for proving Theorem \ref{MT} will be as follows. We will first catalogue, up to conjugation, all possible embeddings of $\SL(2,\C)$ satisfying the hypotheses of \ToE2. The list is remarkably short.  Specifically, for every candidate ToE subgroup of $\cE$, the group $G$ is contained in the maximal compact, connected subgroup $G_\mx$ of the centralizer of $\SL(2, \C)$ in $\cE$.  The proof of Theorem \ref{MT} shows that the only possibilities are:
\begin{equation}\label{Results}
\begin{array}{ccc} 
\cE & G_\mx & V_{2,1}\\ \hline
\E_{8(-24)} & \Spin(11) & 32 \\
\E_{8(8)}&\Spin(5)\times \Spin(7) & (4,8)\\
\E_{8(-24)}&\Spin(9)\times \Spin(3) & (16,2) \\
R(\E_{8,\C})&\E_7 & 56\\
R(\E_{8,\C})& \Spin(12) & 32 \oplus 32'\\
R(\E_{8,\C})& \Spin(13) & 64
\end{array}
\end{equation}
We then note that the representation $V_{2,1}$ of $G_{\text{max}}$ (and hence, of any $G\subseteq G_{\text{max}}$) has a self-conjugate structure. In other words, \ToE3 fails.
\end{borel}

\section{Physics background} \label{phys.sec}

One of the central features of modern particle physics is that the world is described by a \emph{chiral gauge theory}.

\begin{borel*} \label{fermions}
Let $M$ be a four-dimensional pseudo-Riemannian manifold, of signature $(3,1)$, which we will take to be oriented, time-oriented and spin. Let $G$ be a compact Lie group. The data of a \emph{gauge theory on $M$ with gauge group $G$} consists of a connection, $A$, on a principal $G$-bundle, $P\to M$, and some ``matter fields" transforming as sections of vector bundle(s) associated to unitary representations of $G$.

Of particular interest are the \emph{fermions} of the theory. The orthonormal frame bundle of $M$ is a principal $\SO(3,1)_0$ bundle. A choice of spin structure defines a lift to a principal $\Spin(3,1)_0 = \SL(2,\mathbb{C})$ bundle.  Let $S_\pm \to M$ be the irreducible spinor bundles, associated, via the defining two-dimensional representation and its complex conjugate, to this $\SL(2,\mathbb{C})$ principal bundle.

The \emph{fermions of our gauge theory} are denoted
\[
  \psi \in \Gamma(S_+\otimes V),\qquad \overline{\psi} \in \Gamma(S_-\otimes \overline{V})
\]
where $V\to M$ is a vector bundle associated to a (typically reducible) representation $R$ of $G$.
\end{borel*}

\begin{defn}\label{physreps}
Consider, $V$, a unitary representation of $G$ over $\C$---i.e., a homomorphism $G \ra \U(V)$---and an antilinear map $J \!: V \ra V$ that commutes with the action of $G$.  The map $J$ is called a \emph{real structure} on $V$ if $J^2 =1$; physicists call a representation possessing a real structure \emph{real}.  The map $J$ is called a \emph{quaternionic structure} on $V$ if $J^2 = -1$; physicists call a representation possessing a quaternionic structure \emph{pseudoreal}.

Subsuming these two subcases, we will say that $V$ \emph{has a self-conjugate structure} if  there exists an antilinear map $J: V\to V$ commuting with the action of $G$ and satisfying $J^4 =1$.
Physicists call a representation $V$ that does not possess a self-conjugate structure \emph{complex}.
\end{defn}

\begin{rmk}
We sketch how to translate the above definition into the language of algebraic groups and Galois descent as in \cite{Borel} and \cite[\S{X.2}]{SeLF}.  Let $G$ be an algebraic group over $\R$ and fix a representation $\rho \!: G \ot \C \ra \GL(V)$ for some complex vector space $V$.  Let $J$ be an antilinear map $V \ra V$ that satisfies
\begin{equation} \label{JG}
\rho(g) = J^{-1} \rho(\overline{g}) J \quad \text{for $g \in G(\C)$.}
\end{equation}
We define real, quaternionic, etc., by copying the second and third sentences verbatim from Definition \ref{physreps}.

(In the special case where $G$ is compact, there is necessarily a positive-definite invariant hermitian form on $V$ and $\rho$ arises by complexifying some map $G \ra \U(V)$; this puts us back in the situation of Def.~\ref{physreps}.  In the special case where $G$ is connected, the hypothesis from Def.~\ref{physreps} that $J$ commutes with $G(\R)$---which is obviously implied by \eqref{JG}---is actually equivalent to \eqref{JG}.  Indeed, both sides of \eqref{JG} are morphisms of varieties over $\C$, so if they agree on $G(\R)$---which is Zariski-dense by \cite[18.2(ii)]{Borel}---then they are equal on $G(\C)$.)

 If $V$ has a real structure $J$, then the $\R$-subspace $V'$ of elements of $V$ fixed by $J$ is a real vector space and $V$ is canonically identified with $V' \ot \C$ so that $J(v' \ot z) = v' \ot \overline{z}$ for $v' \in V'$ and $z \in \C$; this is Galois descent.  Because $\rho$ commutes with complex conjugation (which acts in the obvious manner on $G(\C)$ and via $J$ on $V$), it is the complexification of a homomorphism $\rho' \!: G \ra GL(V')$ defined over $\R$ by \cite[AG.14.3]{Borel}.  Conversely, if there is a representation $(V', \rho')$ whose complexification is $(V, \rho)$, then taking $J$ to be complex conjugation on $V = V' \ot \C$ defines a real structure on $(V, \rho)$.

If $V$ has a quaternionic structure $J$, then we define a real structure $\hat{J}$ on $\hat{V} := V \oplus V$ via $\hat{J} (v_1, v_2) := (J v_2, - J v_1)$.

Finally, suppose that $G$ is reductive and $V$ is irreducible (as a representation over $\C$, of course).  Then by \cite[\S7]{Ti:R}, there is a unique irreducible real representation $W$ whose complexification $W \ot \C$ contains $V$ as a summand.  By Schur, $\End_G(W)$ is a division algebra, and we have three possibilities:
\begin{itemize}
\item $\End_G(W) = \R$, $W \ot \C \simeq V$, and $V$ has a real structure.
\item $\End_G(W) = \H$, $W \ot \C \simeq V \oplus V$, and $V$ has a quaternionic structure.
\item $\End_G(W) = \C$, $W \ot \C \simeq V \oplus \overline{V}$ where $V \not\simeq \overline{V}$, and $V$ is complex.
\end{itemize}
We have stated this remark for $G$ a group over $\R$, but all of it generalizes easily to the case where $G$ is reductive over a field $F$ and is split by a quadratic extensions $K$ of $F$.
\end{rmk}

\begin{defn}\label{chiral}
A gauge theory, with gauge group $G$, is said to be \emph{chiral} if the representation $R$ by which the fermions \eqref{fermions} are defined is complex in the above sense. By contrast, a gauge theory is said to be \emph{nonchiral} if the representation $R$ in \ref{fermions} has a self-conjugate structure.

Note that whether a gauge theory is chiral depends crucially on the choice of $G$. A gauge theory might be chiral for gauge group $G$, but \emph{nonchiral} for a subgroup $H\subset G$.  That is, there can be a self-conjugate structure on $R$ compatible with $H$, even though no such structure exists that is compatible with the full group $G$.

Conversely, suppose that a gauge theory is nonchiral for the gauge group $G$. It is also necessarily nonchiral for any gauge group $H\subset G$.
\end{defn}

\subsection*{GUTs}
The Standard Model is a chiral gauge theory with gauge group 
\[
G_\SM := (\SU(3) \times \SU(2) \times \U(1)) / (\Zm6)
\]
Various grand unified theories (GUTs) proceed by embedding $G_\SM$ is some (usually simple) group, $G_\GUT$. Popular choices for $G_\GUT$ are  $\SU(5)$ \cite{GeorgiGlashow}, $\Spin(10)$, $\E_6$, and the Pati-Salam group, $(\Spin(6)\times \Spin(4))/ (\Zm2)$ \cite{PatiSalam}.

It is easiest to explain what the fermion representation of $G_\SM$ is after embedding $G_\SM$ in $G_\GUT := \SU(5)$. Let $W$ be the five-dimensional defining representation of $\SU(5)$. The representation $R$ from \ref{fermions} is the direct sum of three copies of
\[
   R_0=\wedge^2 W \oplus \overline{W}
\]
Each such copy is called a ``generation" and is 15-dimensional.  One identifies each of the 15 weights of $R_0$ with left-handed fermions: 6 quarks (two in a doublet, each in three colors), two leptons (e.g., the electron and its neutrino), 6 antiquarks, and a positron.  With three generations, $R$ is 45-dimensional.

\begin{defn}\label{ngen.def}
As a generalization, physicists sometimes consider the \emph{$n$-generation Standard Model}, which is defined in similar fashion, but with $R=R_0^{\oplus n}$. The $n$-generation Standard Model is a chiral gauge theory, for any positive $n$. Particle physics, in the real world, is described by ``the" Standard Model, which is the case $n=3$.
\end{defn}

For the other choices of GUT group, the analogue of a generation ($R_0$) is higher-dimensional, containing additional fermions that are not seen at low energies. When decomposed under $G_\SM\subset G_\GUT$, the representation decomposes as $R_0 + R'$, where $R'$ is a real representation of $G_\SM$. In $\Spin(10)$, a generation is the 16-dimensional half-spinor representation. In $\E_6$, it is a 27-dimensional representation, and for the Pati-Salam group it is the $(4,1,2)\oplus (\overline{4},2,1)$ representation. In each case, these representations are complex representations (in the above sense) of $G_\GUT$, and the complex-conjugate representation is called an ``anti-generation."

\section{Lisi's proposal from \protect{\cite{Lisi}}}\label{LisiElaboration}
In the previous section, we have described a chiral gauge theory in a fixed (pseudo) Riemannian structure on $M$. Lisi's proposal \cite{Lisi} is to try to combine the spin connection on $M$ and the gauge connection on $P$ into a single dynamical framework.  This motivates Definition \ref{toe.def} of a ToE subgroup.

More precisely, following \cite{Lisi}, we fix
subgroups $\SL(2, \C)$ and $G$ --- say, with $G = G_\SM$ --- satisfying \ToE1 in some real Lie group $\cE$. The action of the central element $-1 \in \SL(2, \C)$ provides a $\mathbb{Z}/2\mathbb{Z}$-grading on the Lie algebra of $\cE$. This $\mathbb{Z}/2\mathbb{Z}$-grading allows one to define a sort of superconnection associated to $\cE$ (precisely what sort of superconnection is explained in a blog post by the first author \cite{Distler:SuperConnections}).  In the proposal of \cite{Lisi}, we are supposed to identify each of the generators of $\Lie(\cE)$ as either a boson or a fermion.  (See Table 9 in \cite{Lisi} for an identification of the 240 roots.)

The Spin-Statistics Theorem \cite{StreaterWightman:PCT} says that fermions transform as spinorial representations of $\Spin(3,1)$; bosons transform as ``tensorial" representations (representation which lift to the double cover, $\SO(3,1)$). To be consistent with the Spin-Statistics Theorem, we must, therefore, require that the fermions belong to the $-1$-eigenspace of the aforementioned $\mathbb{Z}/2\mathbb{Z}$ action, and the bosons to the $+1$-eigenspace.

In fact, to agree with \ref{fermions}, we should require that the $-1$-eigenspace (when tensored with $\mathbb{C}$) decompose as a direct sum of two-dimensional representations (over $\C$) of $\SL(2,\C)$, corresponding to ``left-handed" and ``right-handed" fermions, in the sense of \ref{fermions}.  

\subsection*{Interpretations of $V_{m,n}$ and \ToE2}\label{toe2exp}
In the notation of \eqref{decomp}, the $V_{m,n}$, with $m+n$ odd, correspond to fermions; those with $m+n$ even correspond bosons. In Lisi's setup, the bosons are 1-forms on $M$, with values in a vector bundle associated to the aforementioned $\Spin(3,1)_0$ principal bundle via the $m\otimes n$ representation (with $m+n$ even). The $V_{m,n}$ with $m+n=4$ are special; they correspond to the gravitational degrees of freedom in Lisi's theory.  $(3\otimes 1) \oplus (1\otimes 3)$ is the adjoint representation of $\SL(2,\C)$; these correspond to the spin connection. The 1-form with values in the $2\otimes 2$ representation is the vierbein\footnote{In making this identification, we have tacitly assumed that $V_{2,2}$ is one-dimensional. This is, in fact, required for a unitary interacting theory. We will not, however, impose this additional constraint. Suffice to say that it is not satisfied by any of the \emph{candidate ToE subgroups} (per Definition \ref{toe.def}) of $\E_8$.}. 

It is a substantial result from physics (see sections 13.1, 25.4 of \cite{Weinberg:QFT}) that a unitary interacting theory is incompatible with massless particles in higher representation ($m + n\ge 6$).  Our hypothesis \ToE2 reflects this and also forbids gravitinos ($m+n=5$). In \S\ref{supergravity}, we will revisit the possibility of admitting gravitinos.

\subsection*{Explanation of \ToE3}
Our hypothesis \ToE3 says that the candidate ``Theory of Everything" one obtains from subgroups $\SL(2,\C)$ and $G$ as in \ToE1 must be \emph{chiral} in the sense of Definition \ref{chiral}.
\footnote{Of course, there are many other features of the Standard Model that a candidate Theory of Everything must reproduce. We have chosen to focus on the  requirement that the theory be {chiral} for two reasons. First,  it is ``physically robust": Whatever intricacies a quantum field theory may possess at high energies, if it is \emph{non}-chiral, there is no known mechanism by which it could reduce to a \emph{chiral} theory at low energies (and there are strong arguments \cite{tHooft} that no such mechanism exists). Second,  chirality is easily translated into a mathematical criterion---our \ToE3. This allows us to study a purely representation-theoretic question and side-step the difficulties of making sense of Lisi's proposal as a dynamical quantum field theory.}

In private communication, Lisi has indicated that he objects to our condition \ToE3, because he no longer wishes to identify all 248 generators of $\Lie(\cE)$ as particles (either bosons or fermions). In his new---and unpublished---formulation, only a subset are to be identified as particles. In  particular, $V_{2,1}$ is typically a reducible representation of $G$ and, in his new formulation, only a subrepresentation corresponds to particles (fermions). This is not the approach followed in \cite{Lisi}, where all 248 generators are identified as particles and where, moreover, 20-odd of these are claimed to be new as-yet undiscovered particles---a prediction of his theory. As recently as April 2009, Lisi reiterated this prediction in an essay published in the \emph{Financial Times}, \cite{FinTimes}.

Our paper assumes that the approach of \cite{Lisi} is to be followed, and  that all 248 generators are to be identified as particles, hence \ToE3. In any case, even if one identifies only a subset of the generators as particles, all the fermions must come from the $(-1)$-eigenspace, which is too small to accommodate 3 generations, as we now show.

\subsection*{No-go based on dimensions}  \label{dim}
The fermions of Lisi's theory correspond to weight vectors in $V_{m,n}$, with $m+n$ odd. In particular, the weight vectors in $V_{2,1}$ and $V_{1,2}$ correspond  (as in \S\ref{fermions}) to left- and right-handed fermions, respectively.  Since there are $3\times 15= 45$ known fermions of each chirality, $V_{2,1}$ must be at least 45-dimensional, and similarly for $V_{1,2}$.
Thus, the $-1$-eigenspace of the central element of $\SL(2,\C)$, which contains
  $(2\otimes 1\otimes V_{2,1}) \oplus (1\otimes 2\otimes V_{1,2})$,
must have dimension at least $2 \times 2 \times 45 = 180$.

When $\cE$ is a real form of $\E_8$, the $-1$-eigenspace has dimension 112 or 128 (this is implicit in Elie Cartan's classification of real forms of $\E_8$ as in \cite[p.~518, Table V]{Helgason}),\footnote{Alternatively, Serre's marvelous bound on the trace from \cite[Th.~3]{Se:trace} or \cite[Th.~1]{EKV} implies that for every element $x$ of order 2 in a reductive complex Lie group $G$, the $-1$-eigenspace of $\mathrm{Ad}(x)$ has dimension $\le (\dim G + \rank G) / 2$.  In particular, when $G$ is a real form of $\E_8$, the $-1$-eigenspace has dimension $\le 128$.} so no identification of the fermions as distinct weight vectors in $\Lie(\cE)$ (as in Table 9 in \cite{Lisi}) can be compatible with the Spin-Statistics Theorem and the existence of three generations.

These dimensional considerations do not, however, rule out the possibility of accommodating a 1- or 2-generation Standard Model (per Definition \ref{ngen.def}) in a real form of $\E_8$. That requires more powerful considerations, which are the subject of our main theorem.  We now turn to the proof of that theorem.

\section{$\sl_2$ subalgebras and the Dynkin index}

\begin{borel*}
In \cite[\S2]{Dynk:ssub}, Dynkin defined the \emph{index} of an inclusion $f \!: \g_1 \injects \g_2$ of simple complex Lie algebras as follows.  Fix a Chevalley basis of the two algebras, so that the Cartan subalgebra $\h_1$ of $\g_1$ is contained in the Cartan subalgebra $\h_2$ of $\g_2$.  The Chevalley basis identifies $\h_i$ with the complexification $\Qch_i \ot \C$ of the coroot lattice $\Qch_i$ of $\g_i$, and the inclusion $f$ gives an inclusion $\Qch_1 \ot \C \injects \Qch_2 \ot \C$.  Fix the Weyl-invariant inner product $(\,,\,)_i$ on $\Qch_i$ so that $(\ach, \ach)_i = 2$ for short coroots $\ach$.  Then the \emph{Dynkin index} of the inclusion is the ratio $(f(\ach), f(\ach))_2 / (\ach, \ach)_1$ where $\ach$ is a short coroot of $\g_1$.  For example, the irreducible representation $\sl_2 \ra \sl_n$ has index $\binom{n+1}{3}$ by \cite[Eq.~(2.32)]{Dynk:ssub}. 
\end{borel*}

\begin{borel*} \label{sl2.h}
We now consider the case $\g_1 = \sl_2$ and write simply $\g$ and $\Qch$ for $\g_2$ and $Q_2^\vee$.  The coroot lattice of $\sl_2$ is $\Z$ and the image of 1 under the map $\Z \injects \Qch$ is an element $h \in \h$ called the \emph{defining vector} of the inclusion.
In \S8 of his paper (or see \cite[\S{VIII.11}]{Bou:g7}), Dynkin proved that, after conjugating by an element of the automorphism group of $\g$, one can assume that the defining vector $h$ satisfies the strong restrictions:
\[
h = \sum_{\delta \in \Delta} p_\delta \delta^\vee \quad \text{for $p_\delta$ real and non-negative \cite[Lemma 8.3]{Dynk:ssub},}
\]
where $\Delta$ denotes the set of simple roots of $\g$ and further that
\begin{equation} \label{dynk.norm}
\delta(h) \in \{ 0, 1, 2 \} \quad \text{for all $\delta \in \Delta$.}
\end{equation}
But note that for each simple root $\delta$, the fundamental irreducible representation of $\g$ with highest weight dual to $\delta^\vee$ restricts to a representation of $\sl_2$ with $p_\delta$ as a weight, hence \emph{$p_\delta$ is an integer.} 

As a consequence of these generalities and specifically \cite[Lemma 8.2]{Dynk:ssub}, one can identify an $\sl_2$ subalgebra of $\g$ up to conjugacy by writing the Dynkin diagram of $\g$ and putting the number $\delta(h)$ from \eqref{dynk.norm} at each vertex; this is the \emph{marked Dynkin diagram} of the $\sl_2$ subalgebra.

Here is an alternative formula for computing the index of an $\sl_2$ subalgebra from  its marked Dynkin diagram.  Write $\kappa_\g$ and $m^\vee$ for the Killing form and dual Coxeter number of $\g$.  We have:
\begin{equation} \label{index.alt}
(\text{Dynkin index}) = \frac12 (h, h) = \frac{1}{4m^\vee} \kappa_\g(h, h) = \frac{1}{2m^\vee} \sum_{\text{positive roots $\alpha$ of $\g$}} \alpha(h)^2,
\end{equation}
where the second equality is by, e.g., \cite[\S5]{GrossNebe}, and the third is by the definition of $\kappa_\g$.  One can calculate the number $\alpha(h)$ by writing $\alpha$ as a sum of positive roots and applying the marked Dynkin diagram for $h$.
\end{borel*}

\begin{lem} \label{conj1}
For every simple complex Lie algebra $\g$, there is a unique copy of $\sl_2$ in $\g$ of index $1$, up to conjugacy.
\end{lem}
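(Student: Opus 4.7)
The plan is to combine Dynkin's normal-form theorem for $\sl_2$-subalgebras (\S\ref{sl2.h}) with a short argument about shortest vectors of the coroot lattice. First, for existence, I would take the highest root $\theta$ of $\g$ and the $\sl_2$-triple $(e_\theta, \theta^\vee, f_\theta)$ built from root vectors: its defining vector is $h = \theta^\vee$, a short coroot (because $\theta$ is long), so the normalization gives $(\theta^\vee, \theta^\vee) = 2$ and hence Dynkin index $(h,h)/2 = 1$ by \eqref{index.alt}.

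For uniqueness, suppose $\sl_2 \hookrightarrow \g$ has index $1$ with defining vector $h$. Then $h \in \Qch$ by \S\ref{sl2.h}, and the index-$1$ hypothesis reads $(h,h) = 2$. After conjugating by the Weyl group I may assume $h$ is dominant. The whole claim then reduces to the lattice statement that the only dominant element of $\Qch$ of squared length $2$ is $\theta^\vee$.

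That lattice statement splits into two ingredients. The first, and the only step with real content, is that under the normalization $(\ach,\ach)=2$ for short coroots, the short coroots are exactly the shortest nonzero vectors of $\Qch$. I would verify this case by case: for simply-laced $\g$ one has $\Qch = Q$ and the minimum-length vectors of a root lattice are its roots (standard for $A_n, D_n, E_6, E_7, E_8$); for $B_n$ the coroot lattice is the $D_n$-lattice whose length-$2$ vectors are $\pm e_i \pm e_j$; the types $C_n$, $F_4$, $G_2$ are read off the standard root data in exactly the same way. The second ingredient is that all short coroots are $W$-conjugate (being the coroots of the long roots, which form a single Weyl orbit), so the unique dominant short coroot is $\theta^\vee$, forcing $h = \theta^\vee$.

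The finite lattice check is the only place where the structure of $\g$ is genuinely used, and it is the sole potential obstacle — though in each type it is essentially a one-line inspection. Everything else (integrality of $h$ in $\Qch$, putting $h$ in the dominant chamber, and Weyl transitivity on short coroots) is formal and follows directly from the material already set up in \S\ref{sl2.h}.
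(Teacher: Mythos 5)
Your proof is correct, and its main line---reduce to the defining vector $h \in \Qch$ with $(h,h)=2$ via \eqref{index.alt}, then show that the minimal vectors of the coroot lattice are exactly the short coroots, which form a single Weyl orbit whose unique dominant member is $\theta^\vee$---is the same as the paper's. The one genuine divergence is type $\mathrm{B}$. The paper declines to run the lattice argument when the coroot lattice is of type $\mathrm{C}_n$ and instead classifies $\sl_2$-subalgebras of $\so_n$ ($n$ odd) by the partition of $n$ describing the restriction of the vector representation, using that the composition $\sl_2 \ra \so_n \ra \sl_n$ has index $\sum \binom{n_i+1}{3}$ while $\so_n \ra \sl_n$ has index $2$; the unique partition giving index $1$ is $2+2+1+\cdots+1$. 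You instead note that the coroot lattice of $\mathrm{B}_n$ is the $\mathrm{D}_n$ lattice, whose vectors of squared length $2$ are exactly the $\pm e_i \pm e_j$, i.e., the short coroots---so the uniform lattice argument does go through at index $1$. (The caution about type $\mathrm{B}$ only begins to matter at index $2$, where the $\mathrm{D}_n$ lattice acquires non-coroot vectors $\pm e_i \pm e_j \pm e_k \pm e_l$ of the same squared length $4$ as the long coroots $\pm 2e_i$; this is precisely the source of the two conjugacy classes of index-$2$ subalgebras mentioned after the lemma.) Your version is slightly more uniform; the paper's detour through partitions buys an explicit description that it reuses later, in the proof of Lemma \ref{ind22}.
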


This is (equivalent to) Theorem 2.4 in \cite{Dynk:ssub}.  We give a different proof for the convenience of the reader.

\begin{proof}
The index of an $\sl_2$-subalgebra is $(h,h)/2$, where the defining vector $h$ belongs to the coroot lattice $\Qch$.  If $\g$ is not of type B, then the coroot lattice is not of type C, and the claim amounts to the statement that the 
vectors of minimal length in the coroot lattice are actually coroots.  This follows from the constructions of the root lattices in \cite[\S12.1]{Hum:LA}. 

Otherwise $\g$ has type B and is $\so_n$ for some odd $n \ge 5$.  The conjugacy class of an $\sl_2$-subalgebra is determined by the restriction of the natural $n$-dimensional representation; they are parameterized by partitions of $n$ (i.e., $\sum n_i = n$) so that the even $n_i$ occur with even multiplicity and some $n_i > 1$, see \cite[5.1.2]{CMcG} or \cite[\S6.2.2]{OV3}.  The index of the composition $\sl_2 \ra \so_n \ra \sl_n$ is then $\sum \binom{n_i+1}{3}$; we must classify those partitions such that this sum equals the Dynkin index of $\so_n \ra \sl_n$, which is 2.  The unique such partition is $2 + 2 + 1 + \cdots + 1 > 0$.
\end{proof}

In the bijection between conjugacy classes of $\sl_2$ subalgebras and orbits of nilpotent elements in $\g$ from \cite[3.2.10]{CMcG}, the unique orbit of index 1 $\sl_2$'s corresponds to the minimal nilpotent orbit described in \cite[4.3.3]{CMcG}.

If $\g$ has type C, $\F$, or $\G$, then the argument in the proof of the lemma shows that there is up to conjugacy a unique copy of $\sl_2$ in $\g$ with index 2, 2, or 3 respectively.  For $\g$ of type $\B_n$ with $n \ge 4$, there are two conjugacy classes of $\sl_2$-subalgebras of index 2.  This amounts to the fact that there are vectors in the $\mathrm{C}_n$ root lattice that are not roots but have the same length as a root---specifically, sums of two strongly orthogonal short roots, cf.~Exercise 5 in \S12 of \cite{Hum:LA}.

\section{Copies of $\sl_{2,\C}$ in the complex $\E_8$} \label{sl2.e8.sec}

We now prove some facts about copies of $\sl_{2,\C}$ in the complex Lie algebra $\e_8$ of type $\E_8$.  Of course, the 69 conjugacy classes of such are known---see \cite[pp.~182--185]{Dynk:ssub} or \cite[pp.~430--433]{Carter:big}---but we do not need this information.

Fix a pinning for $\e_8$; this includes a Cartan subalgebra $\h$, a set of simple roots $\Delta := \{ \alpha_i \mid 1 \le i \le 8 \}$ (numbered 
\begin{equation} \label{e8.num}
\begin{tabular}{ccccccc}
1&3&4&5&6&7&8\\
&&2
\end{tabular}
\end{equation}
as in \cite{Bou:g4}), and fundamental weights $\omega_i$ dual to $\alpha_i$. As all roots of the $\E_8$ root system have the same length, we can and do identify the root system with its  coroot system (also called the ``inverse" or ``dual" root system).

\begin{eg} \label{sl2.eg1}
Taking any root of $\E_8$, one can generate a copy of $\sl_{2,\C}$ in $\e_8$ with index 1.  Doing this with the highest root gives an $\sl_{2,\C}$ with marked Dynkin diagram
\[
\begin{tabular}{cc}
index 1:&
\begin{tabular}{ccccccc}
0&0&0&0&0&0&1\\
&&0
\end{tabular} 
\end{tabular}
\]
Every index 1 copy of $\sl_2$ in $\e_8$ is conjugate to this one by Lemma \ref{conj1}.
\end{eg}

\begin{eg} \label{sl2.eg2}
One can find a copy of $\sl_{2,\C} \times \sl_{2,\C}$ in $\e_8$ by taking the  first copy to be generated by the highest root of $\E_8$ and the second copy to be generated by the highest root of the obvious $\E_7$ subsystem.  If you embed $\sl_{2,\C}$ diagonally in this algebra, you find a copy of $\sl_{2,\C}$ with index 2 and marked Dynkin diagram
\[
\begin{tabular}{cc}
index 2:&
\begin{tabular}{ccccccc}
1&0&0&0&0&0&0\\
&&0
\end{tabular}
\end{tabular}
\]
\end{eg}

\begin{prop} \label{sl2}
The following collections of copies of $\sl_{2,\C}$ in $\e_8$ are the same:
\begin{enumerate}
\item copies such that $\pm 1$ are weights of $\e_8$ (as a representation of $\sl_{2,\C}$) and no other odd weights occur.
\item copies such that every weight of $\e_8$ is in $\{ 0, \pm 1, \pm 2 \}$.
\item copies such that the inclusion $\sl_{2,\C} \subset \e_8$ has Dynkin index $1$ or $2$.
\item copies of $\sl_{2,\C}$ conjugate to one of those defined in Examples \ref{sl2.eg1} or \ref{sl2.eg2}.
\end{enumerate}
\end{prop}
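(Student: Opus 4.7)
The plan is to take $(4)$ as the common realization of all four conditions: prove $(4) \Rightarrow (1), (2), (3)$ by direct inspection of the two explicit examples, and each of $(1), (2), (3) \Rightarrow (4)$ by invoking Dynkin's normalization of the defining vector from~\ref{sl2.h} (so $p_i := \alpha_i(h) \in \{0,1,2\}$) and reducing to a finite combinatorial check on $(p_1,\ldots,p_8)$. In this normalization, the weight of the root $\alpha = \sum c_i \alpha_i$ on $\e_8$ is $\alpha(h) = \sum c_i p_i$, so all three conditions become constraints on the $p_i$.

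For $(4) \Rightarrow (1), (2), (3)$: in Example~\ref{sl2.eg1}, $h$ equals $\tilde\alpha$ (and equivalently the fundamental coweight dual to $\alpha_8$, since $(\tilde\alpha, \alpha_i) = \delta_{i8}$), so the weight of $\alpha$ is its $\alpha_8$-coefficient $c_8$, which lies in $\{-2,\ldots,2\}$ because $\alpha_8$ has coefficient $2$ in $\tilde\alpha$; hence $(1)$ and $(2)$ hold, and the index is $1$ by construction. Example~\ref{sl2.eg2} is parallel, with $\alpha_1$ in place of $\alpha_8$ and index $2$. For $(2) \Rightarrow (4)$: the condition is $\tilde\alpha(h) = 2p_1 + 3p_2 + 4p_3 + 6p_4 + 5p_5 + 4p_6 + 3p_7 + 2p_8 \le 2$, whose only nontrivial non-negative-integer solutions are $p_1 = 1$ or $p_8 = 1$ with the other $p_i = 0$. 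For $(3) \Rightarrow (4)$: Lemma~\ref{conj1} handles index~$1$; in the index~$2$ case one uses $(h,h) = 4$ from~\eqref{index.alt}, expanded as the quadratic form $p^T A p$ where $A$ is the simply-laced $\E_8$ Cartan matrix, and checks by enumeration that the only admissible marked diagram with $p^T A p = 4$ is Example~\ref{sl2.eg2}.

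The main obstacle is $(1) \Rightarrow (4)$. Under normalization, the maximum weight of $\e_8$ as an $\sl_2$-module is $\tilde\alpha(h) = \sum n_i p_i$. If $\tilde\alpha(h)$ is odd and $\ge 3$, the $\sl_2$-string through the root space of $\tilde\alpha$ already contains weight $3$, violating~(1); hence $\tilde\alpha(h) \in \{0, 2\} \cup 2\mathbb{Z}_{\ge 2}$, and the value $2$ reduces to the $(2) \Rightarrow (4)$ case. To rule out $\tilde\alpha(h) \ge 4$ even, note that condition~(1) forces some $p_i = 1$ (else all weights are even and $\pm 1$ never appears), so it suffices to produce a root of weight~$3$. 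The key tools are the computation $(\tilde\alpha, \alpha_i) = 0$ for $i \ne 8$ and $(\tilde\alpha, \alpha_8) = 1$, so that $\tilde\alpha - \alpha_8$ is the unique $\tilde\alpha - \alpha_i$ that is a root (of weight $\tilde\alpha(h) - p_8$, immediately handling $p_8 = 1$); together with the fact that for each simple root $\alpha_i$ every integer $c \in \{0, 1, \ldots, n_i\}$ is realized as the $\alpha_i$-coefficient of some $\E_8$-root. A short case analysis over the finitely many remaining marked diagrams produces a weight-$3$ root in each, and this enumeration is the chief technical difficulty.
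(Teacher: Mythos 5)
Your overall strategy---normalizing the defining vector as in \ref{sl2.h} so that the marks $p_i=\alpha_i(h)$ lie in $\{0,1,2\}$ and reducing everything to finite combinatorics on the marks---is the same as the paper's, and your treatment of $(4)\Rightarrow(1),(2),(3)$ and of $(2)\Rightarrow(4)$ matches the paper's proof. One technical slip in $(3)\Rightarrow(4)$: with $p_i=\alpha_i(h)$ you have $h=\sum_i p_i\omega_i$ (roots identified with coroots), so $(h,h)=p^{T}A^{-1}p$ with the \emph{inverse} Cartan matrix; $p^{T}Ap$ computes $(h,h)$ only when $p$ records the coefficients of $h$ in the simple-coroot basis, which are not the marks. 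The enumeration does go through once corrected: the diagonal entries of $A^{-1}$ are twice the numbers in \eqref{e8.indexes} and all entries of $A^{-1}$ are positive, so $(h,h)\le 4$ forces $h=\omega_1$ or $\omega_8$ at once---this is in substance the inequality the paper extracts from \eqref{index.alt}.

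The genuine gap is in $(1)\Rightarrow(4)$, which you rightly call the crux but do not actually prove. After reducing to ``$\tilde\alpha(h)$ even, $\ge 4$, and some $p_i=1$,'' you assert that a short case analysis produces a root of weight $3$, but the two tools you name cannot run that analysis by themselves. Knowing that some root has $\alpha_i$-coefficient $3$ does not yield an odd weight $\ge 3$: the weight $\sum_j c_jp_j$ also receives contributions from the other coordinates, so this argument only works once you know $p_i$ is the \emph{unique} odd mark (then the weight is $3$ plus something even and nonnegative). You therefore must first bound the number of marks equal to $1$, and this is where the paper's real work lies: three or more $1$'s are excluded because $\sum_{i\in S}\alpha_i$ is a root for every connected subset $S$ of the Dynkin diagram (choose $S$ meeting exactly three of the $1$'s); exactly two $1$'s are excluded using specific roots of large height from Bourbaki's Plate VII whose coefficients at the two marked nodes have opposite parities and sum at least $3$ (your root $\tilde\alpha-\alpha_8$ covers only the pairs involving node $8$); then the unique $1$ must sit at node $1$ or $8$, and finally $\beta=\sum_i\alpha_i$ forces the remaining even marks to vanish. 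Absent these steps---or an honestly completed enumeration over the several thousand surviving marked diagrams, which you do not supply---the implication $(1)\Rightarrow(4)$ is not established.
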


\begin{proof} 
One easily checks that (4) is contained in (1)--(3); we prove the opposite inclusion.

For (3), we identify $\h$ with the complexification $Q \ot \C$ of the (co)root lattice $Q$, hence $h$ with $\sum \alpha_i(h) \omega_i$.  By \eqref{index.alt}, the index of $h$ satisfies:
\[
\frac{1}{60} \sum_\alpha \alpha(h)^2 = \frac{1}{60} \sum_\alpha \left(\sum_i \alpha_i(h) \qform{\omega_i, \alpha}\right)^2 \ge \sum_i \left( \alpha_i(h)^2 \sum_\alpha \frac{\qform{\omega_i, \alpha}^2}{60} \right)
\]
where the sums vary over the positive roots.
We calculate for each fundamental weight $\omega_i$ the number $\sum_\alpha \qform{\omega_i, \alpha}^2/60$:
\begin{equation} \label{e8.indexes}
\begin{tabular}{rrrrrrr}
2&7&15&10&6&3&1 \\
&&4
\end{tabular}
\end{equation}
As the numbers $\alpha_i(h)$ are all 0, 1, or 2, the numbers \eqref{e8.indexes} show that $h$ for an $\sl_{2,\C}$ with Dynkin index 1 or 2 must be $\omega_1$ (index 2) or $\omega_8$ (index 1).

For (2), the highest root $\at$ of $\E_8$ is $\at = \sum_i c_i \alpha_i$, where $c_1 = c_8 = 2$ and the other $c_i$'s are all at least 3.  As $\at(h)$ is a weight of $\e_8$ relative to a given copy of $\sl_{2,\C}$, we deduce that an $\sl_{2,\C}$ as in (2) must have $h = \omega_1$ or $\omega_8$, as claimed.

\medskip
Suppose now that we are given an $h$ for a copy of $\sl_{2,\C}$ as in (1).  As $\pm 1$ occur as weights, there is at least one 1 in the marked Dynkin diagram.

But note that there cannot be three or more 1's in the marked Dynkin diagram for $h$.  Indeed, for every connected subset $S$ of vertices of the Dynkin diagram of $\E_8$, $\sum_{i \in S} \alpha_i$ is a root \cite[\S{VI.1.6}, Cor.~3b]{Bou:g4}.  If the number of 1's in the marked diagram of $h$ is at least three, then one can pick $S$ so that it meets exactly three of the $\alpha_i$'s with $\alpha_i(h) = 1$, in which case $\sum_{i \in S} \alpha_i(h)$ is odd and at least 3, violating the hypothesis of (1).

For sake of contradiction, suppose that there are two 1's in the marked diagram for $h$, say, corresponding to simple roots $\alpha_i$ and $\alpha_j$ with $i < j$.  For each $i, j$, one can find a root $\beta$ in the list of roots of $\E_8$ of large height in \cite[Plate VII]{Bou:g4} such that the coefficients of $\alpha_i$ and $\alpha_j$ in $\beta$ have opposite parity and sum at least 3.  (Merely taking $\beta$ to be the highest root suffices for many $(i, j)$.)  This contradicts (1), so there is a unique 1 in the marked diagram for $h$, i.e., $\alpha_i(h) = 1$ for a unique $i$.

If $\alpha_i(h) = 1$ for some $i \ne 1, 8$, then we find a contradiction because there is a root $\alpha$ of $\E_8$ with $\alpha_i$-coordinate 3.  Therefore $\alpha_i(h) = 1$ only for $i = 1$ or 8 and not for both.  By the fact used two paragraphs above, $\beta := \sum_i \alpha_i$ is a root of $\E_8$, so $\beta(h) = \sum \alpha_i(h)$ is odd and must be 1.  It follows that $h = \omega_1$ or $\omega_8$.
\end{proof}

\begin{borel}{Centralizer for index 1} \label{eg1.cent}
The $\sl_{2,\C}$ of index 1 in $\e_8$ has centralizer the obvious regular subalgebra $\e_7$ of type $\E_7$.  (A subalgebra is \emph{regular} if it is generated by the root subalgebras corresponding to a closed sub-root-system \cite[no.~16]{Dynk:ssub}.) Indeed, it is clear that $\e_7$ centralizes this $\sl_{2,\C}$.  Conversely, the centralizer of $\sl_{2,\C}$ is contained in the centralizer of $h = \omega_8$---i.e., $\e_7 \oplus \C h$---but does not contain $h$.
\end{borel}

\begin{borel}{Decomposing $\e_8$} \label{sl2.decomp}
Suppose we are given a copy of $\sl_{2,\C}$ in $\e_8$ specified by a defining vector $h$.  By applying the 240 roots of $\e_8$ to $h$ (and throwing in also 0 with multiplicity 8), we obtain the weights of $\e_8$ as a representation of $\sl_{2,\C}$ and therefore also the decomposition of $\e_8$ into irreducible representations of $\sl_{2,\C}$ as in, e.g., \cite[\S7.2]{Hum:LA}.

Extending this, suppose we are given a copy of $\sl_{2,\C} \times \sl_{2,\C}$ in $\e_8$, where the two summands are specified by defining vectors in $\h$.  (Here we want the defining vectors to span the Cartan subalgebras in the images of the two $\sl_{2,\C}$'s.  In particular, they need not be normalized in the sense of \eqref{dynk.norm}.)  Computing as in the previous paragraph, we can decompose $\e_8$ as a direct sum of irreducible representations $m \ot n$ of $\sl_{2,\C} \times \sl_{2,\C}$.  It is easy to write code from scratch to make a computer algebra system perform this computation.  We remark that applying this recipe in the situation from the introduction gives the dimension of $V_{m,n}$ as the multiplicity of $m \ot n$.
\end{borel}

\section{Index 2 copies of $\sl_{2,\C}$ in the complex $\E_8$} \label{ind2.sec}

\begin{lem} \label{eg2.cent}
The centralizer of the index 2 $\sl_{2,\C}$ in $\e_8$ from Example \ref{sl2.eg2}  is a copy of $\so_{13}$ contained in the regular subalgebra $\so_{14}$ of $\e_8$.
\end{lem}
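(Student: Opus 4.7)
The plan is to identify the defining vector of the index 2 copy of $\sl_{2,\C}$ from Example~\ref{sl2.eg2} as $h = \omega_1$ (by the proof of Proposition~\ref{sl2}), place the centralizer of $\sl_{2,\C}$ inside the Levi subalgebra $\so_{14} \oplus \C h$ of $\e_8$ obtained by deleting node~1 (leaving $D_7$), and then show that this centralizer is the semisimple Lie algebra $\so_{13}$. Semisimplicity will then force it into the $\so_{14}$ summand, which is regular as the semisimple part of a Levi.

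To compute the centralizer, I would work through the intermediate reductive subalgebra $\sl_{2,\C} \oplus \sl_{2,\C} \oplus \so_{12} \subset \e_8$, where the two $\sl_{2,\C}$'s are the commuting minimal ones of Example~\ref{sl2.eg2} and $\so_{12}$ is their joint centralizer. Applying~\ref{eg1.cent} identifies the centralizer of the first $\sl_{2,\C}$ as $\e_7$, and running the same argument inside $\e_7$ (with the affine node of $\tilde \E_7$ yielding a regular $D_6$) identifies the centralizer of a minimal $\sl_{2,\C}$ of $\e_7$ as its regular $\so_{12}$. The branching
\begin{equation*}
\e_8 \;=\; (\sl_{2,\C} \oplus \sl_{2,\C} \oplus \so_{12}) \;\oplus\; (2,2,12) \;\oplus\; (2,1,32) \;\oplus\; (1,2,32'),
\end{equation*}
where $12$ is the vector representation and $32, 32'$ are the two half-spinor representations of $\so_{12}$, then follows from $\e_8 = \so_{16} \oplus 128$ combined with the natural inclusion $\so_4 \oplus \so_{12} \subset \so_{16}$; dimensions sum to $6 + 66 + 48 + 64 + 64 = 248$.

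Now restrict to the diagonal $\sl_{2,\C}$: the adjoint summand $\sl_{2,\C} \oplus \sl_{2,\C}$ restricts to $3 \oplus 3$ (no invariants); the summands $(2,1,\cdot)$ and $(1,2,\cdot)$ remain $2$-dimensional (no invariants); $\so_{12}$ is fully invariant, contributing $66$ dimensions; and $(2,2,12)$ contributes a single copy of the vector representation (from the trivial piece of $2 \otimes 2 = 1 \oplus 3$). The centralizer is therefore $\so_{12} \oplus V_{12}$ as an $\so_{12}$-module, of dimension $78$. It is reductive (being the centralizer of a reductive subgroup), and has trivial center by Schur's lemma applied to the two irreducible $\so_{12}$-summands, hence semisimple. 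The equivariant bracket $V_{12} \otimes V_{12} \to \so_{12}$ is unique up to scalar (since $\Lambda^2 V_{12} \cong \so_{12}$ as $\so_{12}$-modules) and nonzero by semisimplicity, so the centralizer is isomorphic to $\so_{13}$ (the only semisimple Lie algebra with this $\so_{12}$-module structure and dimension).

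The final containment is then immediate: semisimplicity of $\so_{13}$ kills its projection to the abelian summand $\C h$, placing it inside the regular $\so_{14}$. The main obstacle is the displayed branching of $\e_8$, which can either be cited from standard tables or verified directly from the root-theoretic recipe of~\ref{sl2.decomp} applied to defining vectors spanning the two Cartans.
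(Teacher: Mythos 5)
Your argument is correct, and it shares the paper's skeleton: both proofs sandwich the centralizer between the regular $\so_{12}$ (the centralizer of the full $\sl_{2,\C}\times\sl_{2,\C}$) and the semisimple part $\so_{14}$ of the centralizer of the defining vector $h$, and both compute its dimension to be $78$ from the decomposition \eqref{so12}. Where you genuinely diverge is in identifying the $78$-dimensional algebra: the paper constructs an explicit extra copy of $\sl_{2,\C}$ using the Chevalley structure constants of \cite{Vavilov:const}, checks it centralizes the index $2$ subalgebra, and verifies that together with $\so_{12}$ it generates an $\so_{13}$ with the explicit coroot embedding \eqref{b6e8}; you instead argue abstractly that the centralizer is reductive with trivial center (Schur on the two nontrivial irreducible $\so_{12}$-summands $\so_{12}\oplus V_{12}$), hence semisimple, and that the only possible equivariant bracket $\Lambda^2 V_{12}\to\so_{12}\oplus V_{12}$ is, up to a rescaling of $V_{12}$, the canonical one, forcing the algebra to be $\so_{13}$. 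Your route is cleaner and avoids structure constants entirely, and the final step (semisimplicity kills the projection to $\C h$, so the algebra lands in the derived subalgebra $\so_{14}$ of the Levi) is a nice way to get the stated containment. The one thing your approach does not deliver is the explicit pinning comparison \eqref{b6e8}; the authors say expressly that this byproduct is the reason they spelled out their proof, since \eqref{b6e8} is used downstream (in deriving \eqref{so13}, in Lemma \ref{ind22} via \eqref{ind22.2}, and in \eqref{decomp.22}). So your proof fully establishes the lemma as stated, but a reader following your version would have to reconstruct the explicit embedding separately for the later sections.
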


\begin{proof}
The centralizer of the $\sl_{2,\C}$ of index 2 in $\e_8$ is contained in the centralizer of the defining vector $h$; this centralizer is reductive with semisimple part the regular subalgebra $\so_{14}$ of type $\D_7$.  The centralizer of $\sl_{2,\C}$ contains the centralizer of the $\sl_{2,\C} \times \sl_{2,\C}$ from Example \ref{sl2.eg2}, which is the regular subalgebra $\so_{12}$ of type $\D_6$, as can be seen by the recipe from \cite[pp.~147, 148]{Dynk:ssub}.
Computing as in \ref{sl2.decomp}, we see that the centralizer of $\sl_{2,\C}$ has dimension 78 (as is implicitly claimed in the statement of the lemma), so it lies properly between the regular $\so_{12}$ and the regular $\so_{14}$.  

For concreteness, let us suppose that the structure constants for $\e_8$ are as in \cite{Vavilov:const}.  Define a copy of $\sl_{2,\C}$ by sending $\stbtmat{0}{1}{0}{0}$ to the sum of the elements in the Chevalley basis of $\e_8$ spanning the root subalgebras corresponding to $-\alpha_8$ and the highest root in the obvious $\D_7$ subdiagram.  This copy of $\sl_{2,\C}$ has defining vector $\alpha_2 + \alpha_3 + 2\alpha_4 + 2\alpha_5 + 2\alpha_6 + 2\alpha_7$.  One checks using the structure constants that this $\sl_{2,\C}$ centralizes the index 2 $\sl_{2,\C}$ we started with, and that together with $\so_{12}$ it generates a copy of $\so_{13}$.  In particular, the coroot lattice of this $\so_{13}$ has basis $\bch_1, \ldots, \bch_6$, embedded in the (co)root lattice of $\e_8$ as in the table:
\begin{equation} \label{b6e8}
\begin{array}{c|cccccc}
\so_{13}&\bch_1&\bch_2&\bch_3&\bch_4&\bch_5&\bch_6 \\ \hline
\e_8&\alpha_3&\alpha_4&\alpha_5&\alpha_6&\alpha_7&-\alpha_2- \alpha_3-2\alpha_4 - 2\alpha_5 - 2\alpha_6 - 2\alpha_7
\end{array}
\end{equation}
We remark that the numbering of the coroots $\bch_1, \ldots, \bch_6$ corresponds
to a numbering of the simple roots of $\so_{13}$ as in the diagram
\[
\begin{picture}(7,0.5)
    \put(0.25,0.10){\line(1,0){4}} 
    \put(4.25,0.13){\line(1,0){1}}
    \put(4.25,0.05){\line(1,0){1}}
 
    \put(0.25,0.27){\makebox(0,0.4)[b]{$\beta_1$}}
    \put(1.25,0.27){\makebox(0,0.4)[b]{$\beta_2$}}
    \put(2.25,0.27){\makebox(0,0.4)[b]{$\beta_3$}}
    \put(3.25,0.27){\makebox(0,0.4)[b]{$\beta_4$}}
    \put(4.25,0.27){\makebox(0,0.4)[b]{$\beta_5$}}
    \put(5.25, 0.27){\makebox(0,0.4)[b]{$\beta_6$}}
    
    \put(4.75,0){\makebox(0,0.4)[b]{$>$}}

    \multiput(0.25,0.1)(1,0){6}{\circle*{\darkrad}}

\end{picture}
\]
Dimension count shows that this $\so_{13}$ is the centralizer.
\end{proof}

The claim of the lemma is already in \cite[p.~125]{Elashvili:Z}.  We gave the details of a proof because it specifies an inclusion of $\so_{13}$ in $\e_8$ and a comparison of the pinnings of the two algebras as in \eqref{b6e8}.  

The index 2 $\sl_2$ and the copy of $\so_{13}$ give an $\sl_{2} \times \so_{13}$ subalgebra of $\e_{8}$.  We now decompose $\e_8$ into irreducible representations of $\sl_2 \times \so_{13}$.  We can do this from first principles by restricting the roots of $\e_8$ to the Cartan sublagebras of $\sl_2$ (using the marked Dynkin diagram from Example \ref{sl2.eg2}) and $\so_{13}$ (using \eqref{b6e8}).  Alternatively, we can read the decomposition off the tables in \cite{McKP} as follows.  As in the proof of Lemma \ref{eg2.cent}, $\sl_2$ is contained in the regular subalgebra $\sl_2 \times \sl_2 \times \so_{12}$ of $\e_8$, and the tables on pages 301 and 305 of ibid.\ show that $\e_8$ decomposes as a sum of 
\begin{equation} \label{so12}
\text{the adjoint representation,} \quad 2 \ot 1 \ot  S_+,\quad 1 \ot 2 \ot S_-, \quad \text{and} \quad 2 \ot 2 \ot V,
\end{equation}
where $S_\pm$ denotes the half-spin representations of $\so_{12}$ and $V$ is the vector representation.  We can restrict the representations of $\sl_2 \times \sl_2$ to the diagonal $\sl_2$ subalgebra to obtain a decomposition of $\e_8$ into representations of $\sl_2 \times \so_{12}$.  Consulting the tables in ibid.\ for restricting representations from type $\B_6$ to $\D_6$ allows us to deduce the decomposition
\begin{equation} \label{so13}
1 \ot \so_{13,\C} \quad \oplus \quad 2 \ot (\text{spin}) \quad \oplus \quad 3 \ot 1 \quad \oplus \quad 3 \ot (\text{vector})
\end{equation}
of $\e_8$ as a representation of $\sl_2 \times \so_{13}$.
From this it is obvious that $\so_{13,\C}$ is the Lie algebra of a copy of $\Spin_{13}$ in $\E_8$.



The main result of this section is the following:
\begin{lem} \label{ind22}
Up to conjugacy, there is a unique copy of $\SL_{2,\C} \times \SL_{2,\C}$ in $\E_{8,\C}$ so that each inclusion of $\SL_{2,\C}$ in $\E_{8,\C}$ has index 2.  The centralizer of this $\SL_{2,\C} \times \SL_{2,\C}$ has identity component $\Sp_{4,\C} \times \Sp_{4,\C}$.
\end{lem}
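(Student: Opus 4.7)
The plan is to reduce the problem to a classification inside the centralizer of one factor. By Lemma \ref{eg2.cent}, up to $\E_{8,\C}$-conjugacy the first $\SL_{2,\C}$ factor is the standard index-$2$ copy of Example \ref{sl2.eg2} with centralizer the specific $\so_{13}$ described in \eqref{b6e8}; the second factor must then lie in this $\so_{13}$. Reading off \eqref{b6e8}, the short coroots $\beta_1^\vee,\ldots,\beta_5^\vee$ of $\so_{13}$ map to simple roots of $\e_8$, so the inclusion $\so_{13}\hookrightarrow \e_8$ is isometric on short coroots and has Dynkin index $1$. Multiplicativity of the Dynkin index then forces the second factor to be an $\sl_{2,\C}$ of index $2$ inside $\so_{13}$.

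Next, I would use the standard parametrization of $\sl_{2,\C}$-subalgebras of $\so_n$ by partitions of $n$ in which even parts occur with even multiplicity \cite[5.1.2]{CMcG}, together with the formula $\tfrac12\sum_i\binom{n_i+1}{3}$ for the $\so_n$-Dynkin index of such a subalgebra; enumeration leaves only the partitions $(3,1^{10})$ and $(2^4,1^{5})$ of $13$. The $(2^4,1^{5})$ case is especially transparent: the $\sl_{2,\C}$ is the diagonal of two strongly orthogonal long-root $\sl_2$'s in $\so_{13}$ (e.g.\ those generated by $\epsilon_1+\epsilon_2$ and $\epsilon_3+\epsilon_4$), and the centralizer recipe of \cite[\S6]{CMcG} gives centralizer $\sp_4\oplus \so_5$ inside $\so_{13}$, which via the exceptional isomorphism $\so_5\cong \sp_4$ is $\sp_4\oplus \sp_4$. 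Since the centralizer in $\e_8$ of the $\SL_{2,\C}\times\SL_{2,\C}$ equals the centralizer of the second factor inside $\so_{13}$, this proves the second sentence of the lemma for this class.

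The main obstacle is to establish uniqueness in the face of the other candidate partition $(3,1^{10})$, whose naive $\so_{13}$-centralizer is $\so_{10}$. To address this I would swap the roles of the two factors (after an $\E_{8,\C}$-conjugation), so that the second $\sl_{2,\C}$ plays the standard role and the first lies in its centralizer $\so_{13}'$, and then decompose $\e_8$ as an $\sl_{2,\C}\times\sl_{2,\C}$-module using \ref{sl2.decomp}: the branching from $\sl_2\times \so_{13}$ to $\sl_2\times \sl_2\times \so_{10}$ of the spin representation of $\so_{13}$ (read off via the tables of \cite{McKP}) should either exhibit an $\E_{8,\C}$-element conjugating the $(3,1^{10})$ pair into the $(2^4,1^{5})$ pair, or provide a dimension obstruction. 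A final check that the two central $-I\in \SL_{2,\C}$ map to distinct elements of $\E_{8,\C}$ upgrades the Lie algebra statement to the stated group-level one.
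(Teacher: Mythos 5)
Your reduction to classifying index~$2$ copies of $\sl_2$ inside the centralizer $\so_{13}$ of the standard index~$2$ factor, your enumeration of the two partitions $(3,1^{10})$ and $(2^4,1^5)$, and your identification of the centralizer $\sp_4\oplus\so_5\cong\sp_4\oplus\sp_4$ in the $(2^4,1^5)$ case all agree with the paper's proof. The gap is in how you propose to dispose of $(3,1^{10})$. That class cannot be conjugated into the $(2^4,1^5)$ class---the two pairs decompose $\e_8$ differently, so they are genuinely distinct $\E_8$-conjugacy classes of $\sl_2\times\sl_2$ subalgebras---and there is no dimension obstruction either: both are legitimate $\sl_2\times\sl_2$ subalgebras in which each factor has Dynkin index~$2$. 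So the strategy of ``either exhibit a conjugating element or find a dimension obstruction'' fails on both horns.

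What actually eliminates $(3,1^{10})$ is precisely the group-level point you relegate to a ``final check'' and apply only to the surviving case. For the $(3,1^{10})$ pair, every irreducible summand $m\ot n$ of $\e_8$ has $m+n$ even (equivalently, the two defining vectors have the same parities of coefficients on the simple roots of $\E_8$), so the central element $(-1,-1)$ acts trivially on $\e_8$ and the subgroup generated is $(\SL_2\times\SL_2)/\langle(-1,-1)\rangle$ rather than $\SL_{2,\C}\times\SL_{2,\C}$; the lemma is a statement about subgroups, not subalgebras, so this candidate simply does not qualify. For the $(2^4,1^5)$ pair, summands with $m+n$ odd do occur, so one genuinely obtains $\SL_{2,\C}\times\SL_{2,\C}$. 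Running your $-I$ test on both candidates, instead of only on the second, is what closes the uniqueness argument; as written, your treatment of the first candidate would not succeed.
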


\begin{proof}
As in the proof of Lemma \ref{conj1} (or by the method used to prove Prop.~\ref{sl2}), there are two index 2 copies of $\sl_2$ in $\so_{13}$, coresponding to the partitions
\[
\text{(a)}\quad 3+1+1+\cdots+1 
\quad \text{and} \quad
\text{(b)} \quad 2+2+2+2+1+1+\cdots+1
\]
of 13.  The recipe in \cite[\S5.3]{CMcG} gives defining vectors for these $\sl_2$'s, which we can rewrite in terms of the $\E_8$ simple roots using \eqref{b6e8}:
\begin{equation} \label{ind22.2}
\begin{array}{cl}
\text{(a)}&2\bch_1+2\bch_2+2\bch_3+2\bch_4+2\bch_5+\bch_6 = -\alpha_2+\alpha_3\\
\text{(b)}&\bch_1+2\bch_2+3\bch_3+4\bch_4+4\bch_5+2\bch_6 = -2\alpha_2-\alpha_3-2\alpha_4-\alpha_5
\end{array}
\end{equation}
We can pair each of (a) and (b) with the copy of $\sl_2$ from Example \ref{sl2.eg2} to get an $\sl_2 \times \sl_2$ subalgebra of $\e_8$ where both $\sl_2$'s have index 2.  Clearly, these represent the only two $\E_8$-conjugacy classes of such subalgebras.  With \eqref{ind22.2} in hand, we can calculate the multiplicities of the irreducible representations of $\sl_2 \times \sl_2$ in $\e_8$ as in \ref{sl2.decomp}.

In case (a), every irreducible summand $m \ot n$ has $m + n$ even.  Therefore, this copy of $\sl_2 \times \sl_2$ is the Lie algebra of a subgroup of $\E_8$ isomorphic to $(\SL_2 \times \SL_2)/(-1, -1)$.  (An alternative way to see this is to note that the simple roots with odd coefficients are the same in (\ref{ind22.2}a) and the defining vector in Example \ref{sl2.eg2}.)

In case (b), we have the following table of multiplicities for $m \ot n$:
\begin{equation} \label{22.table}
\begin{array}{cc|rrrc} 
&&1&2&3&m\\ \hline
&1&20&20&6 \\
n&2&20&16&4 \\
&3&6&4&0 
\end{array}
\end{equation}
In particular, it is the Lie algebra of a copy of $\SL_2 \times \SL_2$ in $\E_8$.  The centralizer of (b) in $\Spin_{13}$ has been calculated in \cite[IV.2.25]{SpSt}, and the identity component is $\Sp_4 \times \Sp_4$, as claimed.
\end{proof}

We can decompose $\e_8$ into a direct sum of irreducible representations of the $\sl_2 \times \sl_2 \times \sp_4 \times \sp_4$ subalgebra from Lemma \ref{ind22} by combining 
the decomposition of $\e_8$ into irreducible representations of $\sl_2 \times \so_{13}$ from \eqref{so13} with the tables in \cite{McKP}.   Specifically, we restrict representations from $\so_{13}$ to an $\sp_4 \times \so_8$ subalgebra and then from $\so_8$ to $\sp_4 \times \sl_2$, where this $\sl_2$ also has index 2.
Recall that $\sp_4$ has two fundamental irreducible representations: one that is 4-dimensional symplectic and another that is 5-dimensional orthogonal; we denote them by their dimensions.  With this notation and \ref{decomp}, we find:
\begin{equation} \label{decomp.22}
V_{2,1} \simeq 5 \ot 4, \quad V_{1,2} \simeq 4 \ot 5, \quad V_{2,3} \simeq 1 \ot 4, \quad V_{3,2} \simeq 4 \ot 1, \eand V_{2,2} \simeq 4 \ot 4.
\end{equation}

\section{Copies of $\SL(2, \C)$ in a real form of $\E_8$}

Suppose now that we have a copy of $\SL(2, \C)$ inside a real Lie group $\cE$ of type $\E_8$.  Over the complex numbers, we decompose $\Lie(\cE) \ot \C$ into a direct sum of irreducible representations of $\SL(2, \C) \ot \C \simeq \SL_{2,\C} \times \SL_{2,\C}$; each irreducible representation can be written as $m \ot n$ where $m$ and $n$ denote the dimension of an irreducible representation of the first or second $\SL_{2,\C}$ respectively.  The goal of this section is to prove:

\begin{prop} \label{real.z}
Maintain the notation of the previous paragraph.  If $\Lie(\cE) \ot \C$ contains no irreducible summands $m \ot n$ with $m+n > 4$, then the identity component $Z$ of the centralizer of $\SL(2, \C)$ in $\E$ is a subgroup isomorphic to
\begin{enumerate}
\item  $\Spin(7,5)$ if $\cE$ is split; or 
\item $\Spin(9,3)$ or $\Spin(11, 1)$ if the Killing form of $\Lie(\cE)$ has signature $-24$.
\end{enumerate}
In either case, $\Lie(Z) \ot \C$ is the regular $\so_{12}$ subalgebra of $\Lie(\cE) \ot \C$.
\end{prop}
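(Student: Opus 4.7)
The plan is to complexify, classify the embedding of $\SL_{2,\C}\times\SL_{2,\C}$ in $\E_{8,\C}$ up to conjugacy, and then analyze the compatible real structures.

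The subgroup $\SL(2,\C)\hookrightarrow\cE$ complexifies to $\SL_{2,\C}\times\SL_{2,\C}\hookrightarrow\E_{8,\C}$, and the antiholomorphic involution $\sigma$ that defines $\cE$ swaps the two factors. The hypothesis $V_{m,n}=0$ for $m+n>4$ forces each $\SL_{2,\C}$ to act on $\e_{8,\C}$ with weights in $\{0,\pm 1,\pm 2\}$, so by Proposition~\ref{sl2} each factor has Dynkin index $1$ or $2$. Since $\sigma$ preserves Dynkin indices, the two factors have the same index. By Lemma~\ref{ind22}, the only conjugacy class of $\SL_{2,\C}\times\SL_{2,\C}$ in $\E_{8,\C}$ with both factors of index~$2$ is case~(b), for which \eqref{decomp.22} gives $V_{2,3}\neq 0$, violating the hypothesis. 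So both factors have Dynkin index~$1$.

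The complex centralizer is then forced. The centralizer of the first $\sl_2$ is the regular $\e_7\subset\e_8$ of paragraph~\ref{eg1.cent}. The second $\sl_2$ is an index-$1$ $\sl_2\subset\e_7$, unique up to conjugacy by Lemma~\ref{conj1}, and its centralizer in $\e_7$ is the regular $\spin_{12,\C}$; this proves $\Lie(Z)\ot\C\cong\so_{12,\C}$. Writing $V$ for the vector representation and $S_\pm$ for the two half-spin representations of $\spin_{12,\C}$, substitution of $\e_7=\spin_{12,\C}\oplus\sl_2\oplus(2\ot S_+)$ into $\e_8=\e_7\oplus\sl_2\oplus(2\ot 56)$, together with $56|_{\sl_2\times\spin_{12}}=(2\ot V)\oplus(1\ot S_-)$, yields
\[
\e_{8,\C}\;\cong\;\sl_2^{(1)}\oplus\sl_2^{(2)}\oplus\spin_{12,\C}\oplus(2\ot 1\ot S_-)\oplus(1\ot 2\ot S_+)\oplus(2\ot 2\ot V).
\]
Because $\overline{V_{2,2}}\cong V_{2,2}$, $\sigma$ preserves the $V$ summand, and $\sigma^2=1$ makes $\sigma|_V$ a real structure on the vector representation of $\Spin_{12,\C}$. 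Hence $Z$ acts really on its $12$-dimensional vector representation, which excludes $Z=\Spin^*(12)$ and forces $Z=\Spin(p,q)$ for some $p+q=12$. Since $\SL(2,\C)$ is non-compact, $\cE$ cannot be the compact form $\e_{8(-248)}$, so $\cE\in\{\e_{8(8)},\e_{8(-24)}\}$.

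The remaining task---and the main obstacle---is to pin down $(p,q)$ for each $\cE$. I would proceed by signature matching: the Killing form $K_\e$ is orthogonal across the four $\sigma$-invariant real summands, with contributions (i) $0$ on the real form of $\sl_2^{(1)}\oplus\sl_2^{(2)}\cong\sl(2,\C)$; (ii) $pq-\binom{p}{2}-\binom{q}{2}$ on $\spin(p,q)$; (iii) $\pm 2(p-q)$ on the $48$-dimensional real form of $2\ot 2\ot V$, which is $\R^{3,1}\ot\R^{p,q}$ up to an overall sign; (iv) a contribution from the $128$-dimensional spinor real form that is controlled by the reality type ($p-q\bmod 8$) of the half-spins of $\Spin(p,q)$. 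Requiring the total to equal $+8$ (split) or $-24$ (for $\e_{8(-24)}$), and discarding $(p,q)$ for which no real form of $\e_8$ actually contains the required $\SL(2,\C)\cdot Z$, should leave $(p,q)=(7,5)$ in the split case and $(p,q)\in\{(9,3),(11,1)\}$ in the $\e_{8(-24)}$ case. The delicate step is contribution~(iv); a cleaner alternative is to cross-reference with Helgason's Table~V~\cite{Helgason}, which lists the real forms of $\E_8$ together with their involutive subgroups, and to identify each candidate $Z$ directly inside the corresponding real form.
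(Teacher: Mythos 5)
Your argument tracks the paper's own proof through the identification of $\Lie(Z)\ot\C$: complexify, use Proposition \ref{sl2} to force each $\sl_{2,\C}$ factor to have Dynkin index 1 or 2, observe that conjugation forces equal indices, discard the index-2 case via the nonzero $2\ot 3$ summand, and identify the centralizer of the index-$(1,1)$ pair as the regular $\so_{12}$; your reality argument on the $2\ot 2\ot V$ summand correctly yields $Z\cong\Spin(p,q)$. The gap is in the final step, which is where most of the content of the proposition lives, and you have left it as a plan rather than a proof. Two things are missing. First, you never use the fact that complex conjugation interchanges the summands $2\ot 1\ot S_+$ and $1\ot 2\ot S_-$; this forces $\overline{S_+}\simeq S_-$ as representations of $\Spin(p,q)$, hence $p-q\equiv 2,6\pmod 8$, and is how the paper cuts the candidates to $(11,1)$, $(9,3)$, $(7,5)$ before any signature computation. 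Without it, your proposal to ``discard $(p,q)$ for which no real form of $\e_8$ contains the required $\SL(2,\C)\cdot Z$'' is circular---that containment is precisely what is being determined. Second, your contribution (iv), the signature of the Killing form on the 128-dimensional spinor summand, is the delicate point and you do not compute it; Helgason's Table V lists the Riemannian symmetric spaces (i.e., maximal compact subgroups), so it does not directly hand you which $\Spin(p,q)$ centralizes an $\SL(2,\C)$ in which real form either.

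For comparison, the paper closes this as follows. For $(7,5)$, the subgroup $\SL(2,\C)\cdot\Spin(7,5)$ has real rank $6$, so only the split form can contain it. For $a=1,3$ it passes to the identity component $H$ of the centralizer of $-1\in\SL(2,\C)$, a real form of the half-spin group in 16 dimensions containing $\SL(2,\C)\cdot\Spin(12-a,a)$ as a subgroup of maximal rank. The signature of the Killing form of $\Lie(H)$ is computed from exactly your contributions (i)--(iii)---no 128-dimensional piece appears---and comparison with the short list of signatures of real forms of $\D_8$ admitting a real half-spin representation ($-120$, $-24$, $8$, $-8$) pins down $H$ as isogenous to $\SO(12,4)$ and simultaneously resolves the sign ambiguity in your contribution (iii). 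The troublesome 128-dimensional summand is then the half-spin representation of $H$, whose invariant form is hyperbolic because $\SO(12,4)$ is isotropic of dimension divisible by 8; it therefore contributes $0$, and the Killing form of $\Lie(\cE)$ has signature $-24$. You would need to supply an argument of comparable precision to finish your version.
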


\begin{proof}
Complexifying the inclusion of $\SL(2, \C)$ in $\cE$ and going to Lie algebras gives an inclusion of $\sl_{2,\C} \times \sl_{2,\C}$ in the complex Lie algebra $\e_8$.  The hypothesis on the irreducible summands $m \ot n$ implies that each of the two $\sl_{2,\C}$'s has index 1 or 2 by Proposition \ref{sl2}.  As complex conjugation interchanges the two components, they must have the same index.  

Suppose first that both $\sl_2$'s have index 2.  When we decompose $\e_8$ as in \ref{decomp}, we find the representation $2 \ot 3$ with positive multiplicity 4 by \eqref{22.table}, which violates our hypothesis on the $\SL(2, \C)$ subgroup of $\cE$.

Therefore both $\sl_2$'s have index 1.
Lemma \ref{conj1} (twice) gives that this $\sl_2 \times \sl_2$ is conjugate to the one generated by the highest root of $\E_8$ from Example \ref{sl2.eg1} (so the second $\sl_2$ belongs to the centralizer of type $\E_7$) and by the highest root of the $\E_7$ subsystem and makes up the first two summands of an $\sl_2 \times \sl_2 \times \so_{12}$ subalgebra, the same one used to find \eqref{so12}.  That is, $\so_{12}$ centralizes $\sl_2 \times \sl_2$.  Conversely, the centralizer of the defining vectors of the two copies of $\sl_2$ has semisimple part $\so_{12}$; it follows that $\Lie(Z) \ot \C$ is isomorphic to $\so_{12}$.

From this and the decomposition \eqref{so12}, we see that $Z$ is a real form of $\Spin_{12}$.  As $\Lie(\cE)$ is a real representation of $Z$, we deduce that $V$ is also a real representation of $Z$ but $S_+$ and $S_-$ are not; they are interchanged by the Galois action.  The first observation shows that $Z$ is $\Spin(12-a, a)$ for some $0 \le a \le 6$.  The second shows that $a$ must be 1, 3, or 5, as claimed in the statement of the proposition.

It remains to prove the correspondence between $a$ and the real forms of $\E_8$.  For $a = 5$, this is clear: the subgroup generated by $\SL(2, \C)$ and $\Spin(7,5)$ has real rank 6, so it can only be contained in the split real form. 

Now suppose that $a = 3$ or 1 and that $\SL(2, \C)$ is in the split $\E_8$; we will show that the Killing form of $\cE$ has signature $-24$.  Over $\C$, $\SL(2, \C)$ is conjugate to the copy of $\SL_{2,\C} \times \SL_{2,\C}$ in $\E_{8,\C}$ generated by the highest root of $\E_8$ and the highest root of the natural subsystem of type $\E_7$.  
Writing out these two roots in terms of the $\E_8$ simple roots, we see that $\alpha_3$ and $\alpha_5$ are the only simple roots whose coefficients have different parities.  It follows that the element $-1 \in \SL(2, \C)$---equivalently, $(-1, -1) \in \SL_2 \times \SL_2$---is $h_{\alpha_2}(-1)\,h_{\alpha_3}(-1)$ in the notation of \cite{St}, where $h_{\alpha_i} \!: \C^\times \ra \cE \ot \C$ is the cocharacter corresponding to the coroot $\ach_i$.  Now, $\alpha_2$ and $\alpha_3$ are the only simple roots with odd coefficients in the fundamental weight $\omega_1$, so the subgroup of $\cE \ot \C$ fixed by conjugation by this $-1$ is generated by root subgroups corresponding to roots $\alpha$ such that $\qform{\omega_1, \alpha}$ is even.  These roots form the natural $\D_8$ subsystem of $\E_8$, and in this way we see $\SL(2, \C) \cdot \Spin(12 - a, a)$ as a semisimple subgroup of maximal rank in a copy of a half-spin group $H$ in 16 dimensions---the identity component of the centralizer of $-1$.

We claim that $H$ is isogenous to $\SO(12,4)$.  As $H$ is a half-spin group with a half-spin representation defined over $\R$, it is isogenous to $\SO(16-b, b)$ for $b = 0$, $4$, or $8$ or it is quaternionic; these possibilities have Killing forms of signature $-120$, $-24$, $8$, or $-8$ respectively, as can be looked up in \cite{Ti:tab}, for example.  The adjoint representation of $H$, when restricted to $\SL(2,\C) \cdot \Spin(12-a,a)$, decomposes as the adjoint representation of $\SL(2, \C) \cdot \Spin(12-a,a)$ and $2 \ot 2 \ot V$ by \eqref{so12}.  The Killing form on $H$ restricts to a positive multiple of the Killing form on $\SL(2, \C) \cdot \Spin(12-a,a)$ (as can be seen over $\C$ by the explicit formula on p.~E-14 of \cite{SpSt})---i.e., has signature $-44$ or $-12$ for $a = 1$ or 3---and a form of signature $\pm 2 (12-2a)$ on $2 \ot 2 \ot V$; the sum of these has signature $0$, $-24$, or $-64$ since $a = 1$ or $3$.  Comparing the two lists verifies that $H$ is isogenous to $\SO(12,4)$.

The Killing form on $H$ has signature $-24$.  The invariant bilinear form on the half-spin representation is hyperbolic (because $H$ is isogenous to spin of an isotropic quadratic form of dimension divisible by 8, see \cite[1.1]{G:clif}).  As a representation of $H$, $\Lie(E)$ is a sum of these two representations, and we conclude that the Killing form on $\Lie(E)$ has signature $-24$, as claimed.
\end{proof}

\begin{rmk} \label{real.toep}
We can determine the centralizer and the real form of $\E_8$ also in the excluded case in the proof where both $\sl_2$'s have index 2.  As in Lemma \ref{ind22}, the centralizer is a real form of $\Sp_{4,\C} \times \Sp_{4,\C}$.
The decomposition \eqref{decomp.22} shows that complex conjugation interchanges the two $\Sp_{4,\C}$ terms, so the centralizer is $R(\Sp_{4,\C})$.  Complex conjugation interchanges the irreducible representations appearing in \eqref{decomp} in pairs (contributing 0 to the signature of the Killing form $\kappa_E$ of $E$), except for $2 \ot 2 \ot V_{2,2}$, which has dimension $8^2$.  This last piece breaks up into a 36-dimensional even subspace, and a 28-dimensional odd subspace, contributing 8 to the signature of $\kappa_E$ and proving that the resulting real form of $\E_8$ is the split one.
\end{rmk}

\section{No Theory of Everything in a real form of $\E_8$} \label{real.phys.sec}

In the decomposition \eqref{decomp} of $\Lie(\cE) \ot \C$, the integers $m, n$ are positive, so \ToE2 implies
\begin{equation}
\text{$V_{m,n} = 0$ if $m \ge 4$ or $n \ge 4$.} \tag{ToE2'}\label{ToE2'}
\end{equation}
We prove the following strengthening of the real case of Theorem \ref{MT}:

\begin{lem} \label{MT.R}
If subgroups $\SL(2,\C)$ and $G$ of a real form $\cE$ of $\E_8$ satisfy \ToE1 and \ToEp, then $V_{1,2}$ is a self-conjugate representation of $G$, i.e., \ToE3 fails.
\end{lem}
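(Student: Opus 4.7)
The plan is to classify, up to conjugacy, all copies of $\SL(2,\C)$ in $\cE$ permitted by \ToEp, and to exhibit a quaternionic structure on $V_{1,2}$ as a representation of the maximal compact connected subgroup $G_\mx$ of the centralizer $Z$ of $\SL(2,\C)$ in $\cE$. Since $G$ is compact and connected, it is conjugate (inside $Z$) into $G_\mx$, so the quaternionic (in particular, self-conjugate) structure will restrict to $V_{1,2}|_G$, giving the failure of \ToE3.

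Complexifying, we have $\sl_{2,\C}\times\sl_{2,\C}\subset \e_8$, with complex conjugation swapping the two factors. Hypothesis \ToEp forces every weight of each $\sl_{2,\C}$ on $\e_8$ to lie in $\{-2,-1,0,1,2\}$, so by Proposition \ref{sl2}(2), each factor has Dynkin index $1$ or $2$; conjugation-symmetry makes the two indices coincide. If both equal $1$, Proposition \ref{real.z} identifies $Z$ as $\Spin(7,5)$, $\Spin(9,3)$, or $\Spin(11,1)$ with $\Lie(Z)\otimes\C\cong\so_{12}$, and \eqref{so12} gives $V_{1,2}\cong S_-$, a half-spin representation. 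If both equal $2$, we are in case (b) of Lemma \ref{ind22}, and by Remark \ref{real.toep} $\cE$ is split with the identity component of $Z$ equal to $R(\Sp_{4,\C})$; \eqref{decomp.22} then reads $V_{1,2}\cong 4\otimes 5$.

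In these four cases, $G_\mx$ is respectively $\Spin(11)$, $\Spin(9)\times\Spin(3)$, $\Spin(7)\times\Spin(5)$, or the compact real form of $\Sp_{4,\C}$; and $V_{1,2}|_{G_\mx}$ is respectively the $32$-dimensional spin representation of $\Spin(11)$, the external tensor products $\mathrm{spin}_9\otimes\mathrm{spin}_3$ and $\mathrm{spin}_7\otimes\mathrm{spin}_5$, or (via the diagonal embedding of the compact form into $\Sp_{4,\C}\times\Sp_{4,\C}$) the tensor product of the defining $4$-dimensional representation and the fundamental $5$-dimensional representation of $\Sp_{4,\C}$. Using that $\mathrm{spin}_3$, $\mathrm{spin}_5$, $\mathrm{spin}_{11}$ and the defining representation of the compact form of $\Sp_{4,\C}$ are quaternionic, while $\mathrm{spin}_7$, $\mathrm{spin}_9$ and the $5$-dimensional fundamental of $\Sp_{4,\C}$ are real, and that the tensor product of a real and a quaternionic representation is quaternionic, the required quaternionic structure on $V_{1,2}|_{G_\mx}$ follows.

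The principal subtlety is that \ToEp is strictly weaker than the hypothesis of Proposition \ref{real.z}: the index-$2$ case (b) of Lemma \ref{ind22} is excluded by \ToE2 (because $V_{2,3}\cong 1\otimes 4$ is nonzero and contributes $m+n=5$), but is compatible with \ToEp. We must therefore handle it separately using \eqref{decomp.22} and Remark \ref{real.toep}; once that case is included, verifying the quaternionic character of $V_{1,2}|_{G_\mx}$ in each of the four cases is a routine reality check for spinor representations.
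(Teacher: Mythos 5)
Your proposal is correct and follows essentially the same route as the paper: reduce to the two cases where both $\sl_2$'s have index $1$ or both have index $2$ via Proposition \ref{sl2}, invoke Proposition \ref{real.z} (resp.\ Lemma \ref{ind22} and Remark \ref{real.toep}) to identify the centralizer and its maximal compact subgroup, and check that $V_{1,2}$ restricts to a quaternionic representation using the mod-$4$ reality pattern of odd spin representations. Your explicit identification of $V_{1,2}|_{G_\mx}$ as a tensor product of spin representations (and of $4\otimes 5$ for $\Sp_4$) is just a slightly more concrete phrasing of the paper's argument that $S_\pm$ and their conjugates all restrict to the same quaternionic representation of the compact subalgebra.
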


\begin{proof}
As in the proof of Proposition \ref{real.z}, over the complex numbers we get two copies of $\sl_2$ that embed in $\E_8$ with the same index, which is 1 or 2.  

If the index is 1, we are in the case of that proposition.
The $-1$-eigenspace in $\Lie(\cE)$ (of the element $-1$ in the center of $\SL(2, \C)$) is a real representation of $\SL(2, \C)\cdot G$, and $G$ is contained in a copy of $\Spin(12-a,a)$ for $a = 1$, 3, or 5.  As in the proof of the proposition, there is a representation $W$ of $\SL(2, \C) \times \Spin(12-a,a)$ defined over $\R$ that is isomorphic to 
\[
(2 \ot 1 \ot  S_+) \quad \oplus \quad (1 \ot 2 \ot S_-)
\]
over $\C$.  Now $G$ is contained in the maximal compact subgroup of $\Spin(12-a,a)$, i.e., $\Lie(G)$ is a subalgebra of $\so(11)$, $\so(9) \times \so(3)$, or $\so(7) \times \so(5)$.  The restriction of the two half-spin representations of $\Spin(12-a, a)$ to the compact subalgebra are equivalent \cite[p.~264]{McKP}, and we see that in each case the restriction is \emph{quaternionic}.  (To see this, one uses the standard fact that the spin representation of $\so(2\ell + 1)$ is real for $\ell \equiv 0, 3 \pmod{4}$ and quaternionic for $\ell \equiv 1, 2 \pmod{4}$.)  That is, the restrictions of $S_+$, $S_-$, and their complex conjugates to the maximal compact subgroup are all equivalent (over $\C$), hence the same is true for their further restrictions to $G$, and \ToE3 fails.

If the index is 2, then $G$ is contained in a real form of $\Sp_{4,\C} \times \Sp_{4,\C}$ by Lemma \ref{ind22}.  When we decompose $\e_8$ as in \eqref{decomp}, we find $V_{2,1}$ and $V_{1,2}$ as in \eqref{decomp.22}.  As complex conjugation interchanges these two representations, it follows that complex conjugation interchanges the two $\Sp_{4,\C}$ factors, i.e., the centralizer of $\SL(2, \C)$ has identity component the transfer $R(\Sp_{4,\C})$ of $\Sp_{4,\C}$.  Its maximal compact subgroup is the compact form of $\Sp_{4,\C}$ (also known as $\Spin(5)$), all of whose irreducible representations are self-conjugate.  Therefore, \ToE3 fails.
\end{proof}

\begin{rmk}
It is worthwhile noting that, in each of the three cases in Proposition \ref{real.z} (the three cases where \ToE2 holds), it is possible to embed $G_\SM$ in the centralizer, thus showing that \ToE1 is satisfied. Given such an embedding, a simple computation verifies explicitly that $S_+$ has a self-conjugate structure as a representation of $G_\SM$.

First consider $\Spin(11,1)$. There is an obvious embedding of $G_\GUT := \Spin(10)$. Under this embedding, $S_+$ decomposes as the direct sum of the two half-spinor representations, i.e., as a generation and an anti-generation.

For $\Spin(7,5)$, there is an obvious embedding of the Pati-Salam group, $G_\GUT := (\Spin(6)\times \Spin(4))/ (\Zm2)$. Again, $S_+$ decomposes as the direct sum of a generation and an anti-generation.

Finally, $\Spin(3,9)$ contains $(\SU(3)\times \SU(2)\times \SU(2)\times \U(1))/(\Zm6)$ as a subgroup. Under this subgroup,
\[
  S_+ = (3,2,2)_{1/6} \oplus (\overline{3},2,2)_{-1/6} + (1,2,2)_{-1/2} + (1,2,2)_{1/2}
\]
where the subscript indicates the $\U(1)$ weights, and the overall normalization is chosen to agree with the physicists' convention for the weights of the Standard Model's $\U(1)_Y$. Embedding the $\SU(2)$ of the Standard Model in one of the two $\SU(2)$s, we obtain an embedding of $G_\SM\subset \Spin(3,9)$ where, again $S_+$ has a self-conjugate structure as a representation of $G_\SM$.
\end{rmk}

\section{No Theory of Everything in complex $\E_8$} \label{c.phys.sec}

We now complete the proof of Theorem \ref{MT} by proving the following strengthening of the complex case.
\begin{lem} \label{MT.C}
If subgroups $\SL(2,\C)$ and $G$ of $R(E_{8,\C})$ satisfy \ToE1 and \ToEp, then $V_{1,2}$ is a self-conjugate representation of $G$, i.e., \ToE3 fails.
\end{lem}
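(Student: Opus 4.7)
The strategy parallels Lemma~\ref{MT.R}: complexify, classify the embedded $\sl_{2,\C} \oplus \sl_{2,\C}$ up to a short list of centralizers, and then check that the maximal compact subgroup of each centralizer has only self-conjugate irreducible representations.

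First I would complexify. Using $R(\E_{8,\C}) \otimes_\R \C \simeq \e_8 \oplus \e_8$ and $\SL(2,\C) \otimes_\R \C \simeq \SL_{2,\C} \times \SL_{2,\C}$, with complex conjugation on each side swapping the two summands, the hypothesis \ToE1 gives a swap-equivariant embedding $\sl_{2,\C} \oplus \sl_{2,\C} \hookrightarrow \e_8 \oplus \e_8$. Writing $a_j, b_j \colon \sl_{2,\C} \to \e_8$ for the images of $\sl_{2,\C}^{(1)}, \sl_{2,\C}^{(2)}$ in the $j$-th $\e_8$ factor, swap equivariance identifies $b_2$ with the complex conjugate of $a_1$ and $b_1$ with that of $a_2$; in particular the Dynkin indices of $a_1, b_2$ agree, and so do those of $a_2, b_1$. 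By \ToEp and Proposition~\ref{sl2}, every nonzero such index is $1$ or $2$.

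A short case analysis distinguishes: either (A) the two $\sl_{2,\C}$'s sit in separate $\e_8$ factors ($a_2 = b_1 = 0$), each embedded with index $1$ or $2$ and having centralizer $\e_7$ or $\so_{13}$ respectively (Example~\ref{sl2.eg1}, Lemma~\ref{eg2.cent}); or (B) each $\e_8$ contains a commuting pair of nonzero $\sl_2$'s coming from the four maps, with indices $(1,1)$, $(1,2)$, or $(2,2)$. The centralizer in $\e_8$ of a commuting pair of index $(1,1)$ is the regular $\so_{12}$ of~\eqref{so12}; of index $(2,2)$ it is $\sp_4 \oplus \sp_4$ by Lemma~\ref{ind22}; and of mixed index $(1,2)$ it is $\sl_2 \oplus \so_9$, obtained as the $\e_7$-analogue of Lemma~\ref{eg2.cent} computed inside the $\e_7$ centralizer of the index-$1$ $\sl_2$. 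In every case the complex centralizer of $\sl_{2,\C} \oplus \sl_{2,\C}$ in $\e_8 \oplus \e_8$ has the form $Y_\C \oplus Y_\C$ with swap exchanging the summands, so the identity component of the centralizer of $\SL(2,\C)$ in $R(\E_{8,\C})$ is the transfer $R(Y_\C)$, and its maximal compact subgroup $G_\mx$ is the compact form of $Y_\C$.

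The possibilities for $Y_\C$ are thus $\E_{7,\C}$, $\Spin_{13,\C}$, $\Spin_{12,\C}$, $\SL_{2,\C} \times \Spin_{9,\C}$, and $\Sp_{4,\C} \times \Sp_{4,\C}$. Each simple factor is of a Cartan type ($E_7$, $B_6$, $D_6$, $B_4$, $C_2$, or $A_1$) in which the longest Weyl element $w_0$ acts as $-1$ on the Cartan subalgebra, so every complex irreducible representation of the compact group $G_\mx$ is self-dual. For a unitary representation of a compact group this is equivalent to $V \simeq \overline V$, i.e., the existence of a self-conjugate structure; that structure restricts from $G_\mx$ to any $G \subseteq G_\mx$. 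Hence $V_{1,2}$ is self-conjugate as a representation of $G$, and \ToE3 fails. The main technical step is the $(1,2)$ mixed-index case --- showing that an index-$2$ $\sl_2$ in $\e_7$ has centralizer $\sl_2 \oplus \so_9$ --- after which everything else follows from inspection of Cartan types.
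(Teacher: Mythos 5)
Your overall strategy is the same as the paper's: complexify, reduce to commuting pairs of $\sl_{2,\C}$'s of Dynkin index $1$ or $2$ in each $\e_8$ factor, identify the centralizer $Y_\C$ so that $G$ is conjugate into the compact form of $Y_\C$, and finish by noting that $-1$ lies in the Weyl group of every $Y_\C$ on the list, so that all irreducible representations of the compact form are self-conjugate. Your list of centralizers ($\e_7$, $\so_{13}$, $\so_{12}$, $\sl_2\oplus\so_9$, $\sp_4\oplus\sp_4$) agrees with the paper's. Your route to the mixed $(1,2)$ case---centralizing the index-$2$ $\sl_2$ inside the $\e_7$ that centralizes the index-$1$ one, rather than centralizing a highest-root $\sl_2$ inside the $\so_{13}$ of Lemma~\ref{eg2.cent} as the paper does---is a legitimate variant: the index-$2$ class in $\e_7$ is the $2\mathrm{A}_1$ nilpotent class, whose reductive centralizer is indeed of type $\mathrm{A}_1\times\B_4$. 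But you correctly identify this as the main technical step, and it still has to be carried out (including uniqueness of that class in $\e_7$).

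There is one genuine gap, in the $(2,2)$ sub-case. Lemma~\ref{ind22} classifies copies of the \emph{group} $\SL_{2,\C}\times\SL_{2,\C}$ with both factors of index $2$; at the Lie-algebra level its proof exhibits \emph{two} conjugacy classes of commuting pairs of index-$2$ $\sl_2$'s, cases (a) and (b), and only case (b) has centralizer $\sp_4\oplus\sp_4$. In case (a) (partition $3+1+\cdots+1$ inside the $\so_{13}$ centralizing the first $\sl_2$) the centralizer of the pair is $\so_{10}$, of type $\D_5$, where $-1$ is \emph{not} in the Weyl group---the compact $\Spin(10)$ has genuinely complex half-spin representations---so your concluding argument would break down there. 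You must therefore exclude case (a), and citing Lemma~\ref{ind22} alone does not do it. The paper's fix is to use that the subgroup is an honest copy of $\SL(2,\C)$, i.e., $\phi$ is injective: in case (a) the product map $\SL_2\times\SL_2\to\E_{8,\C}$ kills $(-1,-1)$, so $\phi(-1)=1$, contradicting \ToE{1}. (Alternatively, in case (a) every summand $m\otimes n$ of $\e_8$ has $m+n$ even, so $V_{1,2}=0$ and the conclusion holds vacuously.) Either patch is one line, but as written your classification of centralizers for commuting index-$(2,2)$ pairs is incomplete.
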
 

First, recall the definition of the transfer $R(H_\C)$ of a complex group $H_\C$ as described, e.g., in \cite[\S2.1.2]{PlatRap}.  Its complexification can be viewed as $H_\C \times H_\C$, where complex conjugation acts via
\[ 
\overline{(h_1, h_2)} = (\overline{h_2},\, \overline{h_1}).
\] 
One can view $R(H_\C)$ as the subgroup of the complexification consisting of elements fixed by complex conjugation.

Now consider an inclusion $\phi \!: \SL(2, \C) = R(\SL_{2,\C}) \injects R(\E_{8,\C})$.  Complexifying, we identify $R(\SL_{2,\C})\ot \C$ with $\SL_{2,\C} \times \SL_{2,\C}$ and similarly for $R(E_{8,\C})$ and write out $\phi$ as
\begin{equation} \label{phi.img}
\phi(h_1, h_2) = (\phi_1(h_1) \phi_2(h_2), \psi_1(h_1) \psi_2(h_2))
\end{equation}
for some homomorphisms $\phi_1, \phi_2, \psi_1, \psi_2 \!: \SL_{2,\C} \ra \E_{8,\C}$.  As $\phi$ is defined over $\R$, we have:
\[
\phi(h_1, h_2) 
= \overline{\phi(\overline{h_2}, \overline{h_1})}  =
(\overline{\psi_1(\overline{h_2}) \psi_2(\overline{h_1})}, \overline{\phi_1(\overline{h_2}) \phi_2(\overline{h_1})}),
\]
and it follows that $\psi_1(h_1) = \overline{\phi_2(\overline{h_1})}$ and $\psi_2(h_2) = \overline{\phi_1(\overline{h_2})}$.  Conversely, given any two homomorphisms $\phi_1, \phi_2 \!: \SL_{2,\C} \ra \E_{8,\C}$ (over $\C$) with commuting images, the same equations define a homomorphism $\phi \!: \SL(2, \C) \ra R(\E_{8,\C})$ defined over $\R$.

\begin{proof}[Proof of Lemma \ref{MT.C}]
Write $Z$ for the identity component of the centralizer of the image of the map $\phi_1 \times \phi_2 \!: \SL_{2,\C} \times \SL_{2,\C} \ra \E_{8,\C}$ from \eqref{phi.img}.  Clearly, $G$ is contained in the transfer $R(Z)$ of $Z$.  In each of the cases below, we verify that
\begin{equation} \label{complex.ss}
\parbox{4in}{$Z$ is semisimple and $-1$ is in the Weyl group of $Z$.}
\end{equation}
It follows from this that the maximal compact subgroup of $R(Z)$ is the compact real form $Z_\R$ of $Z$ and that $Z_\R$ is an \emph{inner} form.
Hence every irreducible representation of $Z_\R$ is real or quaternionic, hence every representation of $Z_\R$ is self-conjugate.  That is, \ToE3 fails, which is the desired contradiction.

\smallskip
{\underline{\textsl{Case 1: $\phi_1$ or $\phi_2$ is trivial.}}}
Consider the easiest-to-understand case where $\phi_1$ or $\phi_2$ is the zero map, say $\phi_2$.  In the notation of \eqref{phi.img}, $\phi(h_1, h_2) = (\phi_1(h_1), \overline{\phi_1(\overline{h_2})})$, i.e., $\phi$ is the transfer of the homomorphism $\phi_1 \!: \SL_{2,\C} \ra \E_{8,\C}$.  By Proposition \ref{sl2}, $\phi_1$ has index 1 or 2.
If $\phi_1$ has index 1, then $Z$ is simple of type $\E_7$ by \ref{eg1.cent}, hence \eqref{complex.ss} holds.  If $\phi_1$ has index 2, then $\Lie(Z)$ is isomorphic to $\so_{13,\C}$ by Lemma \ref{eg2.cent}, and again \eqref{complex.ss} holds.

\smallskip
{\underline{\textsl{Case 2: Neither $\phi_1$ nor $\phi_2$ is trivial.}}}
Now suppose that neither $\phi_1$ nor $\phi_2$ is trivial.  Again, Proposition \ref{sl2} implies that $\phi_1$ and $\phi_2$ have Dynkin index 1 or 2.

If $\phi_1$ and $\phi_2$ both have index 1, then (over $\C)$ the homomorphism $\phi_1 \times \phi_2$ is the one from the proof of Proposition \ref{real.z} and $Z$ is the standard $\D_6$ subgroup of $\E_{8,\C}$ and \eqref{complex.ss} holds.

Now suppose that $\phi_1$ and $\phi_2$ both have index 2.  As $\phi$ is an injection, it is not possible that $\phi_1$ and $\phi_2$ both vanish on $-1 \in \SL_{2,\C}$, and it follows from the proof of Lemma \ref{ind22} that $\phi_1 \times \phi_2$ is an injection as in the statement of Lemma \ref{ind22}.  In particular, $Z$ has Lie algebra $\sp_{4,\C} \times \sp_{4,\C}$ of type $\B_2 \times \B_2$ and \eqref{complex.ss} holds.  Note that \ToE2 fails in this case by \eqref{22.table}.

\smallskip
Suppose finally that $\phi_1$ has index 1 and $\phi_2$ has index 2.
We conjugate so that $\phi_2(\sl_2)$ is the copy of $\sl_2$ from Example \ref{sl2.eg2}, and (by Lemma \ref{conj1} for the centralizer $\so_{13}$ of $\phi_2(\sl_2)$) we can take $\phi_1(\sl_2)$ to be a copy of $\sl_2$ generated by the highest root of $\so_{13}$.  Calculating as described in \ref{sl2.decomp}  gives the following table of multiplicities for the irreducible representation $m \ot n$ of $\sl_2 \times \sl_2$ in $\e_8$:
\begin{equation} \label{12.table}
\begin{array}{cc|rrrc} 
&&1&2&3&m\\ \hline
&1&39&18&1 \\
n&2&32&16&0 \\
&3&10&2&0 
\end{array}
\end{equation}
In particular, the $\mathrm{A}_1 \times \B_4$ subgroup of $\Spin_{13}$ that centralizes the image of $\phi_1 \times \phi_2$ is all of the identity component $Z$ of the centralizer in $\E_8$.
Again \eqref{complex.ss} holds.  (Of course, \eqref{12.table} shows that (ToE2) fails.)
\end{proof}

\section{Relaxing \ToE2 to \ToEp} \label{supergravity}

Combining Lemmas \ref{MT.R} and \ref{MT.C} gives a proof not only of Theorem \ref{MT}, but of the following stronger statement.
\begin{thm} \label{MTp}
There are no subgroups $\SL(2, \C) \cdot G$ satisfying \ToE1, \ToEp, and \ToE3 in the (transfer of the) complex $\E_8$ or any real form of $\E_8$. $\hfill\qed$
\end{thm}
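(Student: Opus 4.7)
The plan is essentially trivial at this point: the statement of Theorem \ref{MTp} is nothing more than the union of the conclusions of Lemmas \ref{MT.R} and \ref{MT.C}, so I would proceed by case analysis on the ambient group $\cE$ and cite each lemma in the appropriate case. By the setup, $\cE$ is either a real form of $\E_8$ or the transfer $R(\E_{8,\C})$ of the complex simple group of type $\E_8$. In the first case, Lemma \ref{MT.R} directly says that whenever $\SL(2,\C)$ and $G$ satisfy \ToE1 and \ToEp inside $\cE$, the representation $V_{1,2}$ of $G$ carries a self-conjugate structure, which is exactly the failure of \ToE3. In the second case, Lemma \ref{MT.C} gives the same conclusion. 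Consequently no triple $(\cE, \SL(2,\C), G)$ satisfying all three of \ToE1, \ToEp, and \ToE3 can exist.

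Before invoking the lemmas I would check that no hypothesis needs strengthening in passing from \ToE2 to \ToEp. This is automatic, since \ToEp (``$V_{m,n} = 0$ if $m \ge 4$ or $n \ge 4$'') is strictly weaker than \ToE2 (``$V_{m,n} = 0$ if $m + n > 4$''); both lemmas have already been proven under the weaker hypothesis \ToEp, which is why Theorem \ref{MTp} genuinely strengthens Theorem \ref{MT}. No further bookkeeping is needed.

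The main obstacle has, in this sense, been entirely absorbed into the earlier arguments: identifying the copies of $\sl_{2,\C}$ of Dynkin index $1$ or $2$ via Proposition \ref{sl2}, pinning down their centralizers in Proposition \ref{real.z} and Lemmas \ref{eg2.cent} and \ref{ind22}, and then observing in each case that the relevant fermionic representation of the centralizer (restricted to any compact subgroup $G$) is real or quaternionic rather than complex. Once those ingredients are in hand, the proof of Theorem \ref{MTp} is a two-line case split, and one can simply close with $\hfill\qed$.
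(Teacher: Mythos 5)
Your proof is correct and matches the paper's own argument exactly: the paper proves Theorem \ref{MTp} in one line by combining Lemmas \ref{MT.R} and \ref{MT.C}, both of which are stated and proved under the weaker hypothesis \ToEp. Your additional check that \ToEp\ is implied by \ToE2 (so the lemmas also yield Theorem \ref{MT}) is the same observation the paper makes.
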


We retained hypothesis \ToE2 in the introduction because that is what is demanded by physics.
Technically, it is \emph{possible} for $V_{2,3}$ and $V_{3,2}$ to be nonzero in an interacting theory---so \ToE2 is false but \ToEp\ still holds---but only in the presence of local supersymmetry (i.e., in supergravity theories) \cite{Grisaru:1977kk}. Lisi's framework is not compatible with local supersymmetry, so we excluded this possibility above.

For real forms of $\E_8$, weakening \ToE2 to \ToEp\ only adds the case of $\E_{8(8)}$, with $G_{\text{max}}=\Spin(5)$, where we find
\begin{equation}\label{nonsugradecomp}
V_{3,2} \simeq V_{2,3} = 4, \qquad V_{2,1}\simeq V_{1,2} = 4 \oplus 16
\end{equation}
and we have indicated the irreducible representations of $\Spin(5)$ by their dimensions. Because the gravitinos transform nontrivially under $G_{\text{max}}$ and because of their multiplicity, the only consistent possibility would be a gauged $\mathcal{N}=4$ supergravity theory (for a recent review of such theories, see \cite{SchonWeidner}). Unfortunately, the rest of the matter content (it suffices to look at $V_{2,1}$) is not compatible with $\mathcal{N}=4$ supersymmetry. Even if it were, $\mathcal{N}=4$ supersymmetry would, of course, necessitate that the theory be non-chiral, making it unsuitable as a candidate Theory of Everything.

To summarize the results of this section, the previous subsection, and Remark \ref{real.toep}, weakening \ToE2 to \ToEp\ adds only three additional entries to Table \ref{Results}.
\begin{equation}\label{ExtraResults}
\begin{array}{cccc}
\cE & G_\mx & V_{3,2} & V_{2,1}\\ \hline
\E_{8(8)} & \Spin(5) & 4 & 4\oplus 16 \\
R(\E_{8,\C}) & \Spin(5)\times \Spin(5) & (4,1)\oplus (1,4) & (4,5)\oplus (5,4) \\
R(\E_{8,\C}) & \SU(2)\times \Spin(9) & (2,1) & (2,9)\oplus (2,16) \\
\end{array}
\end{equation}
In each case the fermion representations, $V_{2,1}\simeq V_{1,2}$ and $V_{3,2}\simeq V_{2,3}$, are pseudoreal representations of $G_\mx$.

\section{Conclusion}

In paragraph \ref{dim} above, we observed by an easy dimension count that no proposed Theory of Everything constructed using subgroups of a real form $\cE$ of $\E_8$ has a sufficient number of weight vectors in the $-1$-eigenspace to identify with all known fermions.  The proof of our Theorem  \ref{MT} was quite a bit more complicated, but it also gives much more.  It shows that you cannot obtain a \emph{chiral} gauge theory for \emph{any} candidate ToE subgroup of $\cE$, whether $\cE$ is a real form or the complex form of $\E_8$. In particular, it is impossible to obtain even the 1-generation Standard Model (in the sense of Definition \ref{ngen.def}) in this fashion.

\bigskip
\noindent{\small{\textbf{Acknowledgments.}} The second author thanks Fred Helenius for pointing out a reference and Patrick Brosnan for suggesting this project. The authors' research was supported by  the National Science Foundation under grant nos.\ PHY-0455649 (Distler) and DMS-0653502 (Garibaldi).}

\bibliographystyle{utphys}
\bibliography{physics}

\end{document}